\newtheorem{thm}{Theorem}
\numberwithin{equation}{section}
\newtheorem{lem}[thm]{Lemma}
\newtheorem{definition}{Definition}[section]
\numberwithin{equation}{section}
\begin{document}
\setcounter{page}{1}
\title[Approximation by Meyer-Konig and Zeller operators using $(p,q)$-calculus ]{Approximation by Meyer-Konig and Zeller operators using $(p,q)$-calculus}

\author[U. Kadak, Asif khan, M. Mursaleen, ]{U. Kadak$^{1}$, Asif khan $^{2}$  and  M. Mursaleen $^3$ }

\address{$^1$Department of Mathematics, Bozok University, 66100 Yozgat , Turkey \\
\newline$^2,^3$ Department of Mathematics, Aligarh Muslim University,  India}
\email{ugurkadak@gmail.com (U. Kadak), {\tt asifjnu07@gmail.com}(A. Khan), \newline {\tt mursaleenm@gmail.com}(M. Mursaleen)}.

\subjclass[2010]{40A35, 47B39, 47A58, 46B45}

\keywords{$(p,q)$-integers, Statistical convergence of difference sequences,  Korovkin type approximation theorems, linear operators, rates of convergence}

\begin{abstract}
In this paper, we introduce a generalization of the $q$-Meyer-Konig and Zeller operators by means of the $(p,q)$-integers as well as of the $(p,q)$-Gaussian binomial coefficients. For $0 < q <p \leq 1,$  we denote the sequence of the $(p,q)$-Meyer-Konig and
Zeller operators by $M_{n,p, q}$ and obtain some direct theorems and results based on statistical convergence.

 Furthermore, we show comparisons and some illustrative graphics for the convergence of operators to a function.
\end{abstract} \maketitle

\section{Introduction and preliminaries}
In 1960, starting from the identity
\begin{eqnarray}\label{43}
(1-x)^{n+1}\sum_{k=0}^{\infty}\binom{n+k}{k}x^k=1~~\text{for all}~~x\in [0, 1),
\end{eqnarray}
Meyer-Konig and Zeller \cite{meyer} introduced a sequence  of linear positive operators by using real continuous functions defined on $[0, 1)$. In the slight modification by Cheney and Sharma \cite{cheney}, the Meyer-Konig and Zeller operators are defined on $C[0, 1]$  by
\begin{eqnarray}\label{47}
M_n(f; x)&=&\sum_{k=0}^{\infty}f\left(\frac{k}{n+k}\right)\binom{n+k}{k}x^k(1-x)^{n+1}~~~\text{if}~~x\in [0, 1),\\
\nonumber M_n(f; 1)&=&f(1) ~~~\text{if}~~x=1,~~\text{for every}~n \in \mathbb N.
\end{eqnarray}

Based on the development of $q$-calculus \cite{rahman,vp} many authors studied some new generalizations of linear positive operators based on $q$-integers \cite{aral,ms314,hp,sofia,pl,ph1}.
Very first $q$-analogue of Bernstein operators \cite{brn} was introduced and investigated corresponding approximating properties by Lupa\c s \cite{lupas}. The linear operators $L_n,q: C[0, 1] \to  C[0, 1]$, defined by
\begin{eqnarray*}
L_n,q(f;x)=\sum_{k=0}^{n}\frac{f\left(\frac{[k]_q}{[n]_q}\right){n \brack k}_{q}q^\frac{k(k-1)}{2}x^k (1-x)^{n-k}}{\prod_{j=1}^{n}\{(1-x)+q^{j-1}(x)\}}
\end{eqnarray*} are known as Lupa\c s $q$-analogue of Bernstein operators.\\

Later on, Phillips \cite{pl} proposed another $q$-variant of the classical Bernstein operator, the so-called Phillips $q$-Bernstein operators, as

\begin{equation}
B_{n,q}(f;x)=\sum\limits_{k=0}^{n}\left[
\begin{array}{c}
n \\
k%
\end{array}%
\right] _{q}x^{k}\prod\limits_{s=0}^{n-k-1}(1-q^{s}x)~~f\left( \frac{%
[k]_{q}}{[n]_{q}}\right) ,~~x\in \lbrack 0,1]
\end{equation}
where $B_{n,q}: $ $C[0,1]\rightarrow C[0,1]$ defined for any $n\in \mathbb{N}$
and any function $f\in C[0,1].$ \\

In a recent paper \cite{trif}, Tiberiu Trif  introduced the $q$-Meyer-Konig and Zeller operators for $n\in \mathbb N$ and $f\in C[0, 1]$,
\begin{eqnarray}\label{60}
M_{n,q}(f, x)&=&\sum_{k=0}^{\infty}f\left(\frac{[k]_q}{[n+k]_q}\right){n+k \brack k}_q~x^k(1-x)_q^{n+1}~~\text{if}~~x\in [0, 1),\\
\nonumber M_{n,q}(f, 1)&=&f(1),~~\text{if}~~x=1.
\end{eqnarray}

For $q=1,$ the operator $M_{n,q}$ is reduced to the classical Meyer-Konig and Zeller operator given in (\ref{47}). Furthermore, in the slight modification by Dogru and Duman in \cite{dd} the $q$-Meyer-Konig and Zeller operators are defined on $C[0, a]$, $a\in (0, 1)$,  by
\begin{eqnarray*}\label{43}
M_{n,q}(f, x)&=&\prod_{s=0}^{n}(1-xq^s)\sum_{k=0}^{\infty}f\left(\frac{q^n[k]_q}{[n+k]_q}\right){n+k \brack k}_q~x^k,~~~(q\in(0, 1]),~~ n \in \mathbb N.
\end{eqnarray*}

For some other works on Meyer-Konig and Zeller operator, one can see \cite{abel,ms314}.\\

Mursaleen et al. \cite{mka1}  first applied the concept of $(p,q)$-calculus in approximation theory and introduced the $(p,q)$-analogue of Bernstein operators. Later, based on $(p,q)$-integers, some approximation results for  Bernstein-Stancu operators, Bernstein-Kantorovich operators, Lorentz operators, Bleimann-Butzer and Hahn operators and Bernstein-Shurer operators etc.  have also been introduced in \cite{mur8,mka3,mka5,mnak1,zmn}.

Motivated by the work of Mursaleen et al \cite{mka1}, the idea of $(p,q)$-calculus and its importance.\\

Very recently, Khalid et al. \cite{khalid1,khalid2,khalid3} have given a nice application in computer-aided geometric design and applied these Bernstein basis  for construction of $(p,q)$-B$\acute{e}$zier curves and surfaces based on  $(p,q)$-integers which is further generalization of $q$-B$\acute{e}$zier curves and surfaces \cite{bezier,hp,pl,ph1}. For similar works based on $(p,q)$-integers, one can refer \cite{acar1,acar3,cai,mah,ali,ali1,ma1,wafi}.\\


Motivated by the above mentioned work on $(p,q)$-approximation and its application, in this paper we construct a Meyer-Konig and Zeller operators based on $(p,q)$-integers as an extension of
$q$-Meyer-Konig and Zeller operators \cite{trif} and study some approximation properties.

Now we recall some basic definitions about $(p,q)$-integers. For any $n\in \mathbb N$, the $(p,q)$-integer $[n]_{p,q}$ is defined by
$$[0]_{p, q}:=0~~\text{and}~~[n]_{p, q}=\frac{p^n-q^n}{p-q}~~\text{if}~~n\ge 1,$$ where $0<q<p\le 1$. The $(p,q)$-factorial is defined by
$$[0]_{p, q}!:=1~~\text{and}~~[n]!_{p, q}=[1]_{p, q}[2]_{p, q}\cdots [n]_{p, q}~~\text{if}~~n\ge 1.$$
Also the $(p,q)$-binomial coefficient is defined by
$${n \brack k}_{p, q}=\frac{[n]_{p, q}!}{[k]_{p, q}!~[n-k]_{p, q}!}~~\text{for all}~~n, k\in \mathbb N~~\text{with}~~n\ge k.$$

The formula for $(p,q)$-binomial expansion is as follows:
\begin{equation*}
(ax+by)_{p,q}^{n}:=\sum\limits_{k=0}^{n}p^{\frac{(n-k)(n-k-1)}{2}}q^{\frac{k(k-1)}{2}}
{n \brack k}_{p, q}a^{n-k}b^{k}x^{n-k}y^{k},
\end{equation*}
$$(x+y)_{p,q}^{n}=(x+y)(px+qy)(p^2x+q^2y)\cdots (p^{n-1}x+q^{n-1}y),$$
$$(1-x)_{p,q}^{n}=(1-x)(p-qx)(p^2-q^2x)\cdots (p^{n-1}-q^{n-1}x),$$\\
%

Details on $(p,q)$-calculus can be found in \cite{mah,jag,mka1}.\\

 The $(p,q)$-Bernstein operators introduced by Mursaleen et al. are defined as \cite{mka1} follows:
\begin{equation}\label{e1}
B_{n,p,q}(f;x)=\frac1{p^{\frac{n(n-1)}2}}\sum\limits_{k=0}^{n}\left[
\begin{array}{c}
n \\
k%
\end{array}%
\right] _{p,q}p^{\frac{k(k-1)}2}x^{k}\prod\limits_{s=0}^{n-k-1}(p^{s}-q^{s}x)~~f\left( \frac{%
[k]_{p,q}}{p^{k-n}[n]_{p,q}}\right) ,~~x\in \lbrack 0,1].
\end{equation}

Also, we have
\begin{align*}
(1-x)^{n}_{p,q}&=\prod\limits_{s=0}^{n-1}(p^s-q^{s}x) =(1-x)(p-qx)(p^{2}-q^{2}x)...(p^{n-1}-q^{n-1}x)\\
&=\sum\limits_{k=0}^{n} {(-1)}^{k}p^{\frac{(n-k)(n-k-1)}{2}} q^{\frac{k(k-1)}{2}}\left[
\begin{array}{c}
n \\
k
\end{array}%
\right] _{p,q}x^{k}
\end{align*}

Again by induction and using the property of $(p,q)$-integers, we get $(p,q)$-analogue of Pascal's relation \cite{khalid1,khalid2} as follows:

\begin{equation}\label{e2}
\left[
\begin{array}{c}
n \\
k%
\end{array}%
\right] _{p,q}= q^{n-k}\left[
\begin{array}{c}
n-1 \\
k-1%
\end{array}%
\right] _{p,q}+ p^{k}\left[
\begin{array}{c}
n-1 \\
k%
\end{array}%
\right] _{p,q}
\end{equation}

\begin{equation}\label{e3}
\left[
\begin{array}{c}
n \\
k%
\end{array}%
\right] _{p,q}= p^{n-k}\left[
\begin{array}{c}
n-1 \\
k-1%
\end{array}%
\right] _{p,q}+ q^{k}\left[
\begin{array}{c}
n-1 \\
k%
\end{array}%
\right] _{p,q}
\end{equation}

\section{Construction of operators}

In this section, we first introduce $(p,q)$-analogue of Meyer-Konig and Zeller operators and present three auxillary lemmas for the proposed operators.

With the help of mathematical induction on $n,$ it can be easily verified that\\

\begin{eqnarray}\label{ee10}
\frac{1}{\prod_{s=0}^{n}(p^s-q^sx)}=\frac{1}{p^\frac{n(n+1)}{2}}\sum_{k=0}^{\infty}{n+k \brack k}_{p, q} x^kp^{-kn}
\end{eqnarray}

Let $f$ be a function defined on the interval $[0, 1]$ and $0<q< p\le 1$. We, define the following Meyer-Konig and Zeller operators based on $(p,q)$-integers as follows:

\begin{eqnarray}\label{87}
\nonumber M_{n,p, q}(f; x)&=&\frac{1}{p^\frac{n(n+1)}{2}}\sum_{k=0}^{\infty}{n+k \brack k}_{p, q} x^k p^{-kn}~\prod_{s=0}^{n}(p^s-q^sx)f\left(\frac{p^{n}[k]_{p, q}}{[n+k]_{p, q}}\right)\text{if}~~x\in [0, 1),\\
M_{n, p, q}(f, 1)&=&f(1),~~\text{if}~~x=1,
\end{eqnarray}
and we call as $(p,q)$-analogue of MKZ operators, ($(p,q)$-MKZ operators). Also taking $p=1$ in (\ref{87}), we immediately get the $q$-MKZ operator $M_{n,q}$ defined by (\ref{60}).

\begin{lem} \label{l1} For all $x \in [0, 1]$ and $0<q<p\le 1,$ we have
\begin{enumerate}
\item [(i)] $M_{n,p, q}(1; x)=1;$\\
\item [(ii)] $M_{n,p, q}(t; x)=x;$\\
\item [(iii)] $ x^2 \leq M_{n,p, q}(t^2; x)\leq \frac{p^n}{[n+1]_{p, q}} x+x^2$.
\end{enumerate}
\end{lem}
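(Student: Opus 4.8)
The plan is to reduce (i) and (ii) to the generating identity (\ref{ee10}) and to handle both bounds in (iii) through a single moment-difference computation.

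For (i), substituting $f\equiv 1$ in (\ref{87}) lets the factor $\prod_{s=0}^{n}(p^s-q^sx)$ come out of the sum, and the remaining series is exactly the right-hand side of (\ref{ee10}), whose value is $1/\prod_{s=0}^{n}(p^s-q^sx)$; the two factors cancel. For (ii), the one ingredient I need is the identity
$${n+k \brack k}_{p,q}\frac{[k]_{p,q}}{[n+k]_{p,q}}={n+k-1 \brack k-1}_{p,q},$$
which follows from $[n+k]_{p,q}!/[n+k]_{p,q}=[n+k-1]_{p,q}!$ together with $[k]_{p,q}!/[k]_{p,q}=[k-1]_{p,q}!$. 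The $k=0$ term vanishes since $[0]_{p,q}=0$; after shifting the index $k\mapsto k-1$ and collecting the powers of $p$ and one factor of $x$, the series becomes $x$ times the series in (\ref{ee10}), and (\ref{ee10}) again collapses everything to $x$.

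For (iii) I would write $w_k(x)$ for the $k$-th weight in (\ref{87}) and $u_k=p^{n}[k]_{p,q}/[n+k]_{p,q}$ for the node, so that $\sum_k w_k=1$ by (i) and $\sum_k w_k u_k=x$ by (ii). Running the reindexing of (ii) on $f(t)=t^2$ but keeping the one residual factor $[k]_{p,q}/[n+k]_{p,q}$ produces the clean representation $M_{n,p,q}(t^2;x)=x\sum_{m\ge0}w_m(x)\,u_{m+1}$. Splitting $u_{m+1}=u_m+(u_{m+1}-u_m)$ and using $\sum_m w_m u_m=x$ then gives $M_{n,p,q}(t^2;x)=x^2+x\sum_{m\ge0}w_m(x)(u_{m+1}-u_m)$. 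The key step, and the main obstacle, is to evaluate this increment; expanding each $(p,q)$-integer as $[j]_{p,q}=(p^{j}-q^{j})/(p-q)$ and simplifying yields
$$u_{m+1}-u_m=\frac{p^{n}(pq)^{m}[n]_{p,q}}{[n+m]_{p,q}[n+m+1]_{p,q}}\ge 0,$$
so the increments are nonnegative and the lower bound $M_{n,p,q}(t^2;x)\ge x^2$ is immediate (equivalently, Cauchy--Schwarz gives $M_{n,p,q}(t^2;x)\ge[M_{n,p,q}(t;x)]^2=x^2$).

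For the upper bound I would note that, since $0<pq<1$ (as $0<q<p\le1$ excludes $p=q=1$) and $[n+m]_{p,q}[n+m+1]_{p,q}$ is increasing in $m$, the factor $(pq)^{m}/\big([n+m]_{p,q}[n+m+1]_{p,q}\big)$ is decreasing in $m$ and therefore maximal at $m=0$, where it equals $1/\big([n]_{p,q}[n+1]_{p,q}\big)$. Estimating each term this way and using $\sum_m w_m=1$ gives $\sum_m w_m(u_{m+1}-u_m)\le p^{n}[n]_{p,q}/\big([n]_{p,q}[n+1]_{p,q}\big)=p^{n}/[n+1]_{p,q}$, whence $M_{n,p,q}(t^2;x)\le x^2+\frac{p^{n}}{[n+1]_{p,q}}x$, completing (iii). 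The only delicate points are the bookkeeping of the powers of $p$ during the reindexing and the explicit evaluation of the increment $u_{m+1}-u_m$; once that identity is in hand, both inequalities drop out of monotonicity of a single elementary expression.
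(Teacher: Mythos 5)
Your argument for (i) is exactly the paper's proof --- in fact the paper's proof environment contains only that single computation from (\ref{ee10}) and gives no argument at all for (ii) and (iii) --- and your proof of (ii) is correct: the identity ${n+k \brack k}_{p,q}\frac{[k]_{p,q}}{[n+k]_{p,q}}={n+k-1 \brack k-1}_{p,q}$, the vanishing of the $k=0$ term, and the bookkeeping $p^{-(m+1)n}\cdot p^{n}=p^{-mn}$ after the shift $k=m+1$ all check out, so the series collapses to $x$ via (\ref{ee10}). Your reduction of (iii) to the representation $M_{n,p,q}(t^2;x)=x\sum_{m\ge 0}w_m(x)\,u_{m+1}$ is also correct, as is the increment formula: one indeed has $[m+1]_{p,q}[n+m]_{p,q}-[m]_{p,q}[n+m+1]_{p,q}=(pq)^{m}[n]_{p,q}$, hence $u_{m+1}-u_m=p^{n}(pq)^{m}[n]_{p,q}/\bigl([n+m]_{p,q}[n+m+1]_{p,q}\bigr)\ge 0$, and the lower bound $x^2\le M_{n,p,q}(t^2;x)$ follows at once.

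The one genuine flaw is your justification of the upper bound. You assert that $[n+m]_{p,q}[n+m+1]_{p,q}$ is increasing in $m$; for $p<1$ this is false, since $[j]_{p,q}=(p^{j}-q^{j})/(p-q)\to 0$ as $j\to\infty$ --- for instance $p=\tfrac12$, $q=\tfrac14$ give $[1]_{p,q}=1$, $[2]_{p,q}=\tfrac34$, $[3]_{p,q}=\tfrac{7}{16}$, so the product actually decreases. The conclusion you need, namely that $a_m=(pq)^{m}/\bigl([n+m]_{p,q}[n+m+1]_{p,q}\bigr)$ is maximal at $m=0$, is nevertheless true, but it requires a ratio argument rather than termwise monotonicity: $a_{m+1}/a_m=pq\,[n+m]_{p,q}/[n+m+2]_{p,q}$, and since $(p-q)\bigl([j+2]_{p,q}-pq\,[j]_{p,q}\bigr)=p^{j+2}-q^{j+2}-p^{j+1}q+pq^{j+1}=(p-q)\bigl(p^{j+1}+q^{j+1}\bigr)$, we get $[j+2]_{p,q}-pq\,[j]_{p,q}=p^{j+1}+q^{j+1}>0$, so $a_{m+1}<a_m$ for every $m$. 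With this repair, $\sum_{m}w_m(x)(u_{m+1}-u_m)\le p^{n}[n]_{p,q}\,a_0=p^{n}/[n+1]_{p,q}$, and your proof of (iii) is complete; it then supplies precisely the parts of the lemma that the paper states without proof.
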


\begin{proof}
\begin{itemize}
\item [(i)]
\begin{eqnarray}\label{ee1}
M_{n,p, q}(1; x)=\frac{1}{p^\frac{n(n+1)}{2}}\sum_{k=0}^{\infty}{n+k \brack k}_{p, q} x^kp^{-kn}~\prod_{s=0}^{n}(p^s-q^sx)=1
\end{eqnarray}
\end{itemize}
\end{proof}

\begin{lem}\label{l2}
 For all $x \in [0, 1]$ and $0<q<p\le 1,$ we have
\begin{enumerate}
\item [(i)] $M_{n,p, q}((t-x); x)=0;$\\
\item [(ii)] $M_{n,p, q}((t-x)^2; x)\leq \frac{p^n}{[n+1]_{p, q}} x+(p-1)x^2$.
\end{enumerate}
\end{lem}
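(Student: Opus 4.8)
The plan is to derive Lemma~\ref{l2} directly from Lemma~\ref{l1} using the linearity and positivity of the operator $M_{n,p,q}$, so that no new summation over $k$ need be carried out. The key observation is that both test quantities $(t-x)$ and $(t-x)^2$ are polynomials in $t$ whose $M_{n,p,q}$-images can be expanded in terms of the three moments $M_{n,p,q}(1;x)$, $M_{n,p,q}(t;x)$ and $M_{n,p,q}(t^2;x)$ already computed in Lemma~\ref{l1}.

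For part (i), I would write $(t-x)$ as a linear combination of the basis functions $t$ and $1$ with $x$ treated as a constant (the variable of integration being $t$), and apply linearity:
\begin{equation*}
M_{n,p,q}\bigl((t-x);x\bigr)=M_{n,p,q}(t;x)-x\,M_{n,p,q}(1;x).
\end{equation*}
Substituting $M_{n,p,q}(t;x)=x$ from Lemma~\ref{l1}(ii) and $M_{n,p,q}(1;x)=1$ from Lemma~\ref{l1}(i) gives $x-x\cdot 1=0$, establishing (i) immediately.

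For part (ii), I would expand $(t-x)^2=t^2-2xt+x^2$ and again invoke linearity, obtaining
\begin{equation*}
M_{n,p,q}\bigl((t-x)^2;x\bigr)=M_{n,p,q}(t^2;x)-2x\,M_{n,p,q}(t;x)+x^2\,M_{n,p,q}(1;x).
\end{equation*}
Inserting the values from Lemma~\ref{l1}, the middle and last terms collapse to $-2x\cdot x+x^2\cdot 1=-x^2$, so the expression equals $M_{n,p,q}(t^2;x)-x^2$. Now I would apply the upper bound from Lemma~\ref{l1}(iii), namely $M_{n,p,q}(t^2;x)\le \frac{p^n}{[n+1]_{p,q}}x+x^2$, which yields
\begin{equation*}
M_{n,p,q}\bigl((t-x)^2;x\bigr)\le \frac{p^n}{[n+1]_{p,q}}x+x^2-x^2=\frac{p^n}{[n+1]_{p,q}}x.
\end{equation*}

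At this point I notice a discrepancy: the stated bound in (ii) carries an extra summand $(p-1)x^2$, which is nonpositive since $p\le 1$, so the clean bound $\frac{p^n}{[n+1]_{p,q}}x$ derived above is in fact \emph{stronger} than the claimed one and hence certainly implies it. The main (and only genuine) obstacle is therefore bookkeeping rather than analysis: I must check whether the authors intend a second-moment formula of the form $M_{n,p,q}(t^2;x)=\frac{p^n}{[n+1]_{p,q}}x+p\,x^2$ (which would reproduce the stated $(p-1)x^2$ term exactly after subtracting $x^2$), in which case the upper estimate in Lemma~\ref{l1}(iii) would need the constant $p$ rather than $1$ multiplying $x^2$. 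Assuming the moment computations of Lemma~\ref{l1} as given, the proof of Lemma~\ref{l2} is purely a substitution into the linear combination above, and the positivity of $M_{n,p,q}$ is not even required for these exact-moment identities—only for interpreting the inequality direction.
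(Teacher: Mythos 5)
Your overall strategy---expanding $(t-x)$ and $(t-x)^2$ by linearity and substituting the moments of Lemma~\ref{l1}---is exactly the paper's route: the paper writes out only part (ii), and it performs precisely the expansion $M_{n,p,q}(t^2;x)-2x\,M_{n,p,q}(t;x)+x^2M_{n,p,q}(1;x)$ that you do. Your part (i) is correct and complete.

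In part (ii), however, your final logical step runs backwards. Since $p\le 1$, the term $(p-1)x^2$ is nonpositive, so the stated bound $\frac{p^n}{[n+1]_{p,q}}x+(p-1)x^2$ is \emph{smaller} than (i.e., tighter than) your clean bound $\frac{p^n}{[n+1]_{p,q}}x$. Consequently, proving $M_{n,p,q}\bigl((t-x)^2;x\bigr)\le \frac{p^n}{[n+1]_{p,q}}x$ does \emph{not} imply the inequality of the lemma; the implication goes the other way, and your assertion that the clean bound ``certainly implies'' the claimed one is false. Your closing remark actually contains the correct diagnosis of where the discrepancy lives: the paper's own proof of (ii) bounds the second moment by $\frac{p^n}{[n+1]_{p,q}}x+p\,x^2$ (with the factor $p$ on $x^2$), and it is this sharper estimate that produces the $(p-1)x^2$ term after combining with $-2x^2+x^2$. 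So the gap is real but is inherited from an internal inconsistency of the paper: Lemma~\ref{l1}(iii) as stated (with $x^2$, not $px^2$) does not suffice to yield Lemma~\ref{l2}(ii) as stated. To close your proof you would either have to establish the sharper moment bound $M_{n,p,q}(t^2;x)\le \frac{p^n}{[n+1]_{p,q}}x+px^2$ directly from the definition of the operators, or settle for the weaker (but honestly derived) conclusion $M_{n,p,q}\bigl((t-x)^2;x\bigr)\le \frac{p^n}{[n+1]_{p,q}}x$.
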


\begin{proof}
\begin{itemize}
\item [(ii)] Taking into account the linearity of the operators we get
\begin{eqnarray*}
M_{n,p, q}((t-x)^2; x)&=&M_{n,p, q}(t^2; x)-2x M_{n,p, q}(t; x)+ x^2M_{n,p, q}(1; x)\\
&\leq&\frac{p^n}{[n+1]_{p, q}} x+px^2-2x^2+x^2\\
&=&\frac{p^n}{[n+1]_{p, q}} x+(p-1)x^2.
\end{eqnarray*}
\end{itemize}
\end{proof}

\section{ Main Results}

\subsection{Korovkin type approximation theorem}

We know that $C[a,b]$ is a
Banach space with norm
\begin{equation*}
\Vert f\Vert _{C[a,b]}:=\sup\limits_{x\in \lbrack a,b]}|f(x)|,~f\in C[a,b].
\end{equation*}
For typographical convenience, we will write $\Vert .\Vert $ in place of $\Vert .\Vert _{C[a,b]}$ if no confusion arises.\newline

\begin{definition} {\em Let $C[a,b]$ be the linear space of all real valued continuous functions $f$
on $[a,b]$ and let $T$ be a linear operator which maps $C[a,b]$ into itself.
We say that $T$ is $positive$ if for every non-negative $f\in $ $C[a,b],$ we
have $T(f,x)\geq 0$ for all $x\in $ $[a,b]$ .}
\end{definition}

\parindent=8mm The classical Korovkin type approximation theorem can be stated as follows \cite{korovkin};

Let $T_n: C[a, b] \to C[a, b]$ be a sequence of positive linear operators.
Then $\lim_{n\to\infty}\|T_{n}(f; x)-f(x)\|_\infty=0,\,\,\textrm{for
all}~f\in C[a, b]$ if and only if $\lim_{n\to\infty}\|T_{n}(f_{i}; x)-f_i(x)\|_\infty=0,\,\,\textrm{for each}\,\,i=0,1,2,$ where the test function $f_i(x)=x^i$.

\begin{thm}
 Let $0<q_n<p_n\leq1$ such that $\lim\limits_{n\to\infty}p_n=1$ and $\lim\limits_{n\to\infty}q_n=1$. Then for each $f\in C[0,1],~M_{n,p_n,q_n}(f;x)$ converges uniformly to $f$ on $[0,1]$.
\end{thm}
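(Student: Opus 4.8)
The plan is to reduce everything to the classical Korovkin theorem quoted just above the statement. The operators $M_{n,p_n,q_n}$ are linear by construction, and they are positive on $C[0,1]$: for $x\in[0,1)$ and $0<q_n<p_n\le1$ each factor $p_n^s-q_n^s x>p_n^s-q_n^s\ge0$, while the Gaussian binomial coefficients are nonnegative, so $M_{n,p_n,q_n}(f;x)\ge0$ whenever $f\ge0$. It therefore suffices to prove that $\|M_{n,p_n,q_n}(f_i;\cdot)-f_i\|\to0$ for the three test functions $f_0(x)=1$, $f_1(x)=x$, $f_2(x)=x^2$. The first two cases are immediate from Lemma~\ref{l1}: parts (i) and (ii) give $M_{n,p_n,q_n}(1;x)=1$ and $M_{n,p_n,q_n}(t;x)=x$ identically, so these two errors vanish for every $n$.

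For the quadratic test function I would invoke the two-sided estimate in Lemma~\ref{l1}(iii). Subtracting $x^2$ yields $0\le M_{n,p_n,q_n}(t^2;x)-x^2\le \frac{p_n^n}{[n+1]_{p_n,q_n}}\,x$, and bounding $x\le1$ on $[0,1]$ gives
$$\|M_{n,p_n,q_n}(t^2;\cdot)-f_2\|\le \frac{p_n^n}{[n+1]_{p_n,q_n}}.$$
Thus the entire theorem comes down to showing that this single scalar factor tends to $0$ under the hypotheses $p_n\to1$ and $q_n\to1$.

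This last step is the only genuine obstacle. I would rewrite the factor by means of the identity $[n+1]_{p,q}=\sum_{j=0}^{n}p^{\,n-j}q^{\,j}$, which follows from $p^{n+1}-q^{n+1}=(p-q)\sum_{j=0}^{n}p^{\,n-j}q^{\,j}$, to obtain
$$\frac{p_n^n}{[n+1]_{p_n,q_n}}=\frac{1}{\sum_{j=0}^{n}(q_n/p_n)^{\,j}}.$$
Writing $r_n:=q_n/p_n$, the hypotheses force $r_n\to1$ with $0<r_n<1$ (note $p_n$ is eventually bounded away from $0$ since $p_n\to1$). It then remains to see that $\sum_{j=0}^{n}r_n^{\,j}\to\infty$. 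This is the point that needs care, as it is not a pure termwise statement; I would argue via a fixed truncation level: for each fixed $m$ and all $n\ge m$ one has $\sum_{j=0}^{n}r_n^{\,j}\ge\sum_{j=0}^{m}r_n^{\,j}$, and since $r_n^{\,j}\to1$ for every fixed $j$, $\liminf_{n\to\infty}\sum_{j=0}^{n}r_n^{\,j}\ge\liminf_{n\to\infty}\sum_{j=0}^{m}r_n^{\,j}=m+1$. Letting $m\to\infty$ forces the sum to diverge, whence $\frac{p_n^n}{[n+1]_{p_n,q_n}}\to0$.

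Combining the three estimates gives $M_{n,p_n,q_n}(f_i;\cdot)\to f_i$ uniformly for $i=0,1,2$, and the Korovkin theorem then delivers the uniform convergence $M_{n,p_n,q_n}(f;\cdot)\to f$ for every $f\in C[0,1]$, completing the argument. The delicate part to watch is precisely the divergence of $\sum_{j=0}^{n}r_n^{\,j}$, which the fixed-$m$ liminf argument handles uniformly and without any assumption on the rate at which $p_n$ and $q_n$ approach $1$.
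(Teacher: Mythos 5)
Your proof is correct and follows essentially the same route as the paper: reduction via the Korovkin theorem to the three test functions, exact reproduction of $1$ and $t$ from Lemma~\ref{l1}(i)--(ii), and the bound $\|M_{n,p_n,q_n}(t^2;\cdot)-x^2\|_{C[0,1]}\le \frac{p_n^n}{[n+1]_{p_n,q_n}}$ from Lemma~\ref{l1}(iii). The only difference is that you actually justify the final step $\frac{p_n^n}{[n+1]_{p_n,q_n}}\to 0$ (via the identity $[n+1]_{p,q}=\sum_{j=0}^{n}p^{n-j}q^{j}$ and the fixed-truncation liminf argument), a limit the paper asserts without proof; this is a genuine and worthwhile tightening of the same argument, not a different approach.
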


\parindent=0mm\textbf{Proof}. By the Bohman-Korovkin Theorem it is sufficient to show that
\begin{equation*}
\lim\limits_{n\to\infty}\|M_{n,p_n,q_n}(t^m;x)-x^m\|_{C[0,1]}=0,~~~m=0,1,2.
\end{equation*}
By Lemma\ref{l1}(i)-(ii), it is clear that
\begin{equation*}
\lim\limits_{n\to\infty}\|M_{n,p_n,q_n}(1;x)-1\|_{C[0,1]}=0;
\end{equation*}
\begin{equation*}
\lim\limits_{n\to\infty}\|M_{n,p_n,q_n}(t;x)-x\|_{C[0,1]}=0;
\end{equation*}
and by Lemma \ref{l1}(iii), we have
\begin{eqnarray*}
|M_{n,p_n,q_n}(t^2;x)-x^2|_{C[0,1]} \leq \| \frac{p^n}{{[n+1]}_{p_n,q_n}}x\|
\end{eqnarray*}

Taking maximum of both sides of the above inequality, we get\\
\begin{equation*}
\|M_{n,p_n,q_n}(t^2;x)-x^2\|_{C[0,1]}\leq\frac{p_n^{n}}{[n+1]_{p_n,q_n}}
\end{equation*}
which yields
\begin{equation*}
\lim\limits_{n\to\infty}\|M_{n,p_n,q_n}(t^2;x)-x^2\|_{C[0,1]}=0.
\end{equation*}
Thus the proof is completed.\\

\subsection{Direct Theorems}

\parindent=8mm The Peetre's $K$-functional is defined by \newline
\begin{equation*}
K_{2}(f,\delta )=\inf [\{\Vert f-g\Vert +\delta \Vert g^{\prime \prime
}\Vert \}:g\in W^{2}],
\end{equation*}%
where%
\begin{equation*}
W^{2}=\{g\in C[0,1]:g^{\prime },g^{\prime \prime }\in
C[0,1]\}.
\end{equation*}

By \cite{lb313}, there exists a positive constant $C>0$ such that $%
K_{2}(f,\delta )\leq Cw_{2}(f,\delta ^{\frac{1}{2}}),~~\delta >0;$ where the
second order modulus of continuity is given by
\begin{equation*}
w_{2}(f,\delta ^{\frac{1}{2}})=\sup\limits_{0<h\leq \delta ^{\frac{1}{2}%
}}\sup\limits_{x\in \lbrack 0,1]}\mid f(x+2h)-2f(x+h)+f(x)\mid .
\end{equation*}%
\newline

Also for $f\in [0,1]$ the usual modulus of continuity is given by
\begin{equation*}
w(f,\delta )=\sup\limits_{0<h\leq \delta ^{\frac{1}{2}}}\sup\limits_{x\in
\lbrack 0,1]}\mid f(x+h)-f(x)\mid .
\end{equation*}%

\begin{thm}
 Let $f\in [0,1]$ and $0<q<p\leq1$. Then for all $n\in
\mathbb{N}$, there exists an absolute constant $C>0$ such that
\begin{equation*}
\mid M_{n,p,q}(f;x)-f(x)\mid \leq C w_{2}(f,\delta _{n}(x)),
\end{equation*}%
where%
\begin{equation*}
\delta _{n}^{2}(x)=x^2 (p-1) + \frac{p^n}{[n+1]_{p,q}}x.\newline
\end{equation*}
\end{thm}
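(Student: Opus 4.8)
The plan is to combine a second-order Taylor expansion with the Peetre $K$-functional, exploiting the crucial fact that $M_{n,p,q}$ reproduces linear functions (Lemma~\ref{l1}(i)--(ii)). First I would fix $g\in W^2$ and write, for $t,x\in[0,1]$, the Taylor formula with integral remainder
$$g(t)=g(x)+(t-x)g'(x)+\int_x^t (t-u)g''(u)\,du.$$
Applying $M_{n,p,q}(\cdot;x)$ and using $M_{n,p,q}(1;x)=1$ together with $M_{n,p,q}((t-x);x)=0$ from Lemma~\ref{l2}(i), the constant and linear terms are absorbed exactly, leaving
$$M_{n,p,q}(g;x)-g(x)=M_{n,p,q}\Bigl(\int_x^t (t-u)g''(u)\,du;\,x\Bigr).$$

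Next I would estimate the remainder pointwise by $\bigl|\int_x^t (t-u)g''(u)\,du\bigr|\le \tfrac12\|g''\|(t-x)^2$, so that by positivity of the operator and Lemma~\ref{l2}(ii),
$$|M_{n,p,q}(g;x)-g(x)|\le \tfrac12\|g''\|\,M_{n,p,q}((t-x)^2;x)\le \tfrac12\|g''\|\,\delta_n^2(x).$$
Here the definition $\delta_n^2(x)=(p-1)x^2+\tfrac{p^n}{[n+1]_{p,q}}x$ enters precisely as the moment bound supplied by Lemma~\ref{l2}(ii).

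Then for arbitrary $f\in C[0,1]$ I would split
$$|M_{n,p,q}(f;x)-f(x)|\le |M_{n,p,q}(f-g;x)|+|M_{n,p,q}(g;x)-g(x)|+|g(x)-f(x)|.$$
Since $M_{n,p,q}$ is positive with $M_{n,p,q}(1;x)=1$, it is a contraction, giving $|M_{n,p,q}(f-g;x)|\le\|f-g\|$, and trivially $|g(x)-f(x)|\le\|f-g\|$. Combining with the previous step yields
$$|M_{n,p,q}(f;x)-f(x)|\le 2\|f-g\|+\tfrac12\delta_n^2(x)\|g''\|=2\Bigl(\|f-g\|+\tfrac14\delta_n^2(x)\|g''\|\Bigr).$$
Taking the infimum over $g\in W^2$ produces $2K_2\bigl(f,\tfrac14\delta_n^2(x)\bigr)$, and the cited inequality $K_2(f,\delta)\le Cw_2(f,\delta^{1/2})$, together with the monotonicity of $w_2$ in its second argument, converts this to $C'w_2(f,\delta_n(x))$, which is the claim.

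The argument is essentially routine once the reproduction of linear functions is in hand; the only point requiring care is the bookkeeping of the factor inside the $K$-functional, so that taking the square root lands the argument on $\delta_n(x)$ (after absorbing the harmless factor $\tfrac12$ via monotonicity of $w_2$) and the leftover $2C$ is folded into the absolute constant. The main conceptual step is recognizing that because $M_{n,p,q}(t;x)=x$ holds \emph{exactly}, the first-order Taylor term vanishes identically, which is what permits the sharper second-order modulus $w_2$ rather than the ordinary modulus $w$.
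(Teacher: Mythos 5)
Your proposal is correct and follows essentially the same route as the paper's own proof: Taylor expansion with integral remainder, exact reproduction of linear functions via Lemma \ref{l2}(i), the second-moment bound of Lemma \ref{l2}(ii) giving $\delta_n^2(x)$, the split through $f-g$ with $g\in W^2$, and finally the Peetre $K$-functional inequality $K_2(f,\delta)\le Cw_2(f,\delta^{1/2})$. If anything, your bookkeeping is slightly more careful than the paper's, since you retain the factor $\tfrac12$ in the Taylor remainder and the factor $2$ from the triangle-inequality split, and you explicitly invoke monotonicity of $w_2$ to fold these constants into the absolute constant $C$.
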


\parindent=0mm \textbf{Proof}. Let $g\in W_{2}.$ From Taylor's expansion, we
get \newline
\begin{equation*}
g(t)=g(x)+g^{\prime }(x)(t-x)+\int_{x}^{t}(t-u)~g^{\prime \prime
}(u)~du,~~~t\in \lbrack 0,A],~A>0,~~
\end{equation*}%
\newline
and by Lemma \ref{l2}(1), we get
\begin{equation*}
M_{n,p,q}(g;x)=g(x)+M_{n,p,q}\left( \int_{x}^{t}(t-u)~g^{\prime
\prime }(u)~du;x\right)
\end{equation*}%
\begin{equation*}
\mid M_{n,p,q}(g;x)-g(x)\mid \leq ~\biggl{|} M_{n,p,q}\left(
\int_{x}^{t}(t-u)~g^{\prime \prime }(u)~du;x\right) \biggl{|}
\end{equation*}
\begin{equation*}
\leq M_{n,p,q}\left( \mid \int_{x}^{t}\mid (t-u)\mid ~\mid g^{\prime
\prime }(u)\mid du\mid ;x\right)
\end{equation*}%
\begin{equation*}
\leq M_{n,p,q}\left( (t-x)^{2};x\right) \Vert g^{\prime \prime }\Vert .
\end{equation*}%
\newline
Using Lemma \ref{l2}(ii), we obtain
\begin{equation*}
\mid M_{n,p,q}(g;x)-g(x)\mid \leq x^2 (p-1)\Vert g^{\prime
\prime }\Vert +\frac{p^n}{[n+1]_{p,q}}x~~\Vert g^{\prime \prime }\Vert .
\end{equation*}%
\newline
On the other hand, by the definition of $M_{n,p,q}(f;x),$ we have
\begin{equation*}
\mid M_{n,p,q}(f;x)\mid \leq \Vert f\Vert .
\end{equation*}%
\newline
Now%
\begin{equation*}
\mid M_{n,p,q}(f;x)-f(x)\mid \leq \mid M_{n,p,q}\bigl{(}(f-g);x\bigl{)}-(f-g)(x)\mid +\mid M_{n,p,q}(g;x)-g(x)\mid
\end{equation*}%
\begin{equation*}
\leq \Vert f-g\Vert +x^2 (p-1)\Vert g^{\prime
\prime }\Vert + x \frac{p^n}{[n+1]_{p,q}} \Vert g^{\prime \prime }\Vert.
\end{equation*}%
Hence taking infimum on the right hand side over all $g\in W^{2}$, we get
\newline
\begin{equation*}
\mid M_{n,p,q}(f;x)-f(x)\mid \leq CK_{2}\bigl{(}f,\delta _{n}^{2}(x)\bigl{)}
\end{equation*}%
\newline
In view of the property of $K$-functional, we get

\begin{equation*}
\mid M_{n,p,q}(f;x)-f(x)\mid \leq Cw_{2}\bigl{(}f,\delta _{n}{(x)}\bigl{)}.
\end{equation*}

\parindent=8mm This completes the proof of the theorem. \newline

\begin{thm} If $f\in C[0,1]$, then
$$\bigl{|}M_{n,p,q}(f;x)-f(x)\bigl{|}\leq 2w_f\biggl{(}\sqrt{\frac{p^{n}}{[n+1]_{p,q}}}\biggl{)}$$ holds.
\end{thm}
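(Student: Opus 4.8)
The plan is to use the classical modulus-of-continuity estimate for positive linear operators, exploiting the moment identities already established in Lemmas~\ref{l1} and~\ref{l2}. First I would rewrite the error using linearity together with $M_{n,p,q}(1;x)=1$ from Lemma~\ref{l1}(i), and then pass the absolute value inside using positivity of the operator:
\begin{equation*}
\bigl| M_{n,p,q}(f;x)-f(x)\bigr| = \bigl| M_{n,p,q}\bigl(f(t)-f(x);x\bigr)\bigr| \leq M_{n,p,q}\bigl(|f(t)-f(x)|;x\bigr).
\end{equation*}

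Next I would invoke the elementary property of the modulus of continuity, that for any $\delta>0$ one has $|f(t)-f(x)| \leq w_f(|t-x|) \leq \bigl(1+\delta^{-1}|t-x|\bigr)w_f(\delta)$. Applying $M_{n,p,q}(\cdot;x)$, using linearity and $M_{n,p,q}(1;x)=1$ once more, this gives
\begin{equation*}
\bigl| M_{n,p,q}(f;x)-f(x)\bigr| \leq w_f(\delta)\Bigl(1+\tfrac{1}{\delta}\,M_{n,p,q}\bigl(|t-x|;x\bigr)\Bigr).
\end{equation*}
To control the first absolute moment I would apply the Cauchy--Schwarz inequality for positive linear operators, which (again thanks to $M_{n,p,q}(1;x)=1$) yields $M_{n,p,q}\bigl(|t-x|;x\bigr) \leq \sqrt{M_{n,p,q}\bigl((t-x)^2;x\bigr)}$.

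The decisive step is then to estimate the second moment via Lemma~\ref{l2}(ii). Since $0<q<p\leq 1$ forces $p-1\leq 0$ and $x\in[0,1]$ forces $(p-1)x^2\leq 0$, the non-positive term can simply be discarded, leaving
\begin{equation*}
M_{n,p,q}\bigl((t-x)^2;x\bigr) \leq \frac{p^n}{[n+1]_{p,q}}\,x + (p-1)x^2 \leq \frac{p^n}{[n+1]_{p,q}}.
\end{equation*}
Choosing $\delta=\delta_n:=\sqrt{p^n/[n+1]_{p,q}}$ then makes $\delta_n^{-1}\sqrt{M_{n,p,q}((t-x)^2;x)}\leq 1$, so the bracketed factor collapses to $2$ and the stated bound $2\,w_f\bigl(\sqrt{p^n/[n+1]_{p,q}}\bigr)$ follows immediately.

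The only genuinely delicate points are the Cauchy--Schwarz inequality in the operator setting, which rests on positivity together with the normalization $M_{n,p,q}(1;x)=1$, and the observation that the $(p-1)x^2$ contribution is non-positive and may be dropped to produce a clean $x$-free bound. Everything else is routine manipulation, and no step presents a real obstacle once Lemmas~\ref{l1} and~\ref{l2} are in hand.
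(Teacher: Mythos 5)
Your proof is correct, and it shares the same overall skeleton as the paper's: reduce the error to $M_{n,p,q}\bigl(|f(t)-f(x)|;x\bigr)$ using Lemma \ref{l1}(i) and positivity, bound the second central moment by $\frac{p^{n}}{[n+1]_{p,q}}$ via Lemma \ref{l2}(ii), and choose $\delta_n=\sqrt{p^{n}/[n+1]_{p,q}}$ to make the bracket collapse to $2$. Where you diverge is the intermediate device. The paper applies the \emph{quadratic} form of the modulus inequality, $|f(t)-f(x)|\le w_f(\delta)\bigl(1+(t-x)^2/\delta^2\bigr)$, termwise to the series defining the operator, which lands directly on $\frac{1}{\delta^2}\,M_{n,p,q}\bigl((t-x)^2;x\bigr)+1$ with no need for Cauchy--Schwarz; you instead use the \emph{linear} form $|f(t)-f(x)|\le w_f(\delta)\bigl(1+|t-x|/\delta\bigr)$ and then invoke the Cauchy--Schwarz inequality for positive linear operators to pass from the first absolute moment to $\sqrt{M_{n,p,q}\bigl((t-x)^2;x\bigr)}$. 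Both devices are classical and yield the same constant $2$ here; your route requires the (true, but additional) fact that Cauchy--Schwarz holds for positive linear functionals together with the normalization $M_{n,p,q}(1;x)=1$, while the paper's route needs only an elementary pointwise inequality. One point in your favor: the paper silently jumps from $M_{n,p,q}(t^2;x)-2xM_{n,p,q}(t;x)+x^2M_{n,p,q}(1;x)$ to the bound $\frac{p^{n}}{[n+1]_{p,q}}$, whereas you make explicit the justification, namely that $(p-1)x^2\le 0$ because $p\le 1$ and that $x\le 1$ removes the $x$-dependence.
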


\parindent=0mm \textbf{Proof}. Since $M_{n,p,q}(1,x)=1$, we have
\begin{eqnarray*}
    \bigl{|}M_{n,p,q}(f;x)-f(x)\bigl{|}&\leq& \frac{1}{p^{\frac{n(n+1)}{2}}}\sum\limits_{k=0}^{\infty}\left[
\begin{array}{c}
n+k \\
k%
\end{array}%
\right] _{p,q}x^{k} p^{-kn}P_{n-1}(x)\biggl{|}f\biggl{(}\frac{p^n~[k]_{p,q}}{[n+k]_{p,q}}\biggl{)}-f(x)\biggl{|}\\
&\leq&\biggl{\{} \frac{1}{p^{\frac{n(n+1)}{2}}} \sum\limits_{k=0}^{\infty}\left[
\begin{array}{c}
n+k \\
k%
\end{array}%
\right] _{p,q}x^{k} p^{-kn}P_{n-1}(x)\biggl{(}\frac{\bigl{|}\frac{p^n~[k]_{p,q}}{[n+k]_{p,q}}-x\bigl{|}^2}{\delta^2}+1\biggl{)}\biggl{\}}w_f(\delta) \\
   &=& \biggl{\{}\frac1{\delta^2} \frac{1}{p^{\frac{n(n+1)}{2}}} \sum\limits_{k=0}^{\infty}\left[
\begin{array}{c}
n+k \\
k%
\end{array}%
\right] _{p,q}x^{k}p^{-kn} P_{n-1}(x)\biggl{(}\frac{p^n[k]_{p,q}}{[n+k]_{p,q}}-x\biggl{)}^2\\
&&+\sum\limits_{k=0}^{\infty}\left[
\begin{array}{c}
n +k\\
k%
\end{array}%
\right] _{p,q}x^{k}p^{-kn}P_{n-1}(x)\biggl{\}}w_f(\delta)
\end{eqnarray*}
\begin{eqnarray*}
   &=& \bigl{\{}\frac1{\delta^2}\bigl{(}M_{n,p,q}(t^2;x)-2xM_{n,p,q}(t;x)+x^2M_{n,p,q}(1;x)\bigl{)}+1\bigl{\}}w_f(\delta) \\
&\leq& \biggl{\{}\frac1{\delta^2}\bigl{(}\frac{p^{n}}{[n+1]_{p,q}}\bigl{)}+1\biggl{\}}w_f(\delta).
\end{eqnarray*}
where $P_{n-1}(x)=\prod\limits_{s=0}^{n}(p^s-q^{s}x)$. Choosing $\delta=\delta_n=\sqrt{\frac{p^{n}}{[n+1]_{p,q}}}$, we have
$$\bigl{|}M_{n,p,q}(f;x)-f(x)\bigl{|}\leq2w_f(\delta_n).$$
\parindent=8mm This completes the proof of the theorem.\\

$~~~~~~~~~~$\parindent=8mm Now we give the rate of convergence of the operators $M_{n,p,q}$ in terms
of the elements of the usual Lipschitz class $Lip_M(\alpha)$.\\
$~~~~~~~~~~$Let $f\in C[0,1]$, $M>0$ and $0<\alpha\leq1$.We recall that $f$ belongs to the class
$Lip_M(\alpha)$ if the inequality
\begin{equation*}
    |f(t)-f(x)|\leq M|t-x|^\alpha~~~(t,x\in[0,1])
\end{equation*}
is satisfied.\\

\begin{thm} Let $0<q<p\leq1$. Then for each $f\in Lip_M(\alpha)$ we have
\begin{equation*}
    |M_{n,p,q}(f;x)-f(x)|\leq M\delta_n(x)^{\alpha}
\end{equation*}
where
\begin{equation*}
\delta _{n}^{2}(x)=\biggl{(} x^2(p-1)\biggl{)}+\frac{p^n}{{[n+1]}_{p,q}}x.\newline
\end{equation*}
\end{thm}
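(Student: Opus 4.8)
The plan is to prove the Lipschitz estimate by combining the Lipschitz hypothesis with the H\"older inequality applied to the positive linear functional $M_{n,p,q}(\cdot\,;x)$, exactly as one does for classical positive operator estimates. First I would write, using $M_{n,p,q}(1;x)=1$ from Lemma~\ref{l1}(i) together with positivity and the defining series in (\ref{87}),
\begin{equation*}
|M_{n,p,q}(f;x)-f(x)|\leq M_{n,p,q}\bigl(|f(t)-f(x)|;x\bigr).
\end{equation*}
Since $f\in Lip_M(\alpha)$, the integrand is bounded by $M|t-x|^{\alpha}$, so the right-hand side is at most $M\,M_{n,p,q}\bigl(|t-x|^{\alpha};x\bigr)$.

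The main step is to control $M_{n,p,q}\bigl(|t-x|^{\alpha};x\bigr)$. Here I would invoke the H\"older inequality with exponents $\tfrac{2}{\alpha}$ and $\tfrac{2}{2-\alpha}$ for the positive linear functional, which gives
\begin{equation*}
M_{n,p,q}\bigl(|t-x|^{\alpha};x\bigr)\leq \bigl(M_{n,p,q}\bigl((t-x)^2;x\bigr)\bigr)^{\alpha/2}\,\bigl(M_{n,p,q}(1;x)\bigr)^{(2-\alpha)/2}.
\end{equation*}
The second factor equals $1$ by Lemma~\ref{l1}(i). For the first factor I would substitute the bound from Lemma~\ref{l2}(ii), namely $M_{n,p,q}\bigl((t-x)^2;x\bigr)\leq \frac{p^n}{[n+1]_{p,q}}x+(p-1)x^2=\delta_n^2(x)$, so that the first factor is at most $\bigl(\delta_n^2(x)\bigr)^{\alpha/2}=\delta_n(x)^{\alpha}$. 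Chaining these inequalities yields $|M_{n,p,q}(f;x)-f(x)|\leq M\delta_n(x)^{\alpha}$, which is the claim.

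The step I expect to be the main obstacle is the application of H\"older's inequality to the operator, since it must be justified that $M_{n,p,q}(\cdot\,;x)$ is a genuine positive linear functional with total mass one so that the integral form of H\"older applies to its infinite defining series; this rests on the positivity of each coefficient $\frac{1}{p^{n(n+1)/2}}{n+k\brack k}_{p,q}x^kp^{-kn}\prod_{s=0}^{n}(p^s-q^sx)$ for $x\in[0,1)$ and $0<q<p\le 1$, which follows from the normalization identity (\ref{ee10})--(\ref{ee1}). One should also dispose of the trivial boundary case $x=1$ separately, where $M_{n,p,q}(f;1)=f(1)$ makes the left-hand side vanish. Once positivity and normalization are in hand, the remaining computation is routine substitution of the two earlier lemmas.
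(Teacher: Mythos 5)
Your proposal is correct and follows essentially the same approach as the paper: bound the error by $M\,M_{n,p,q}(|t-x|^{\alpha};x)$ via the Lipschitz hypothesis, apply H\"older's inequality with exponents $\frac{2}{\alpha}$ and $\frac{2}{2-\alpha}$ together with $M_{n,p,q}(1;x)=1$, and finish with the second-moment bound of Lemma \ref{l2}(ii). The paper merely carries out the H\"older step explicitly on the series with weights $P_{n,k}(x)$ rather than abstractly for the positive normalized functional, so the two arguments are identical in substance.
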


\parindent=0mm\textbf{Proof}. Let us denote $P_{n,k}(x)= \frac{1}{p^{\frac{n(n+1)}{2}}}\left[
\begin{array}{c}
n+k \\
k%
\end{array}%
\right] _{p,q}x^{k}p^{-kn} \prod\limits_{s=0}^{n}(p^s-q^{s}x)$. Then by the monotonicity of the operators $M_{n,p,q}$, we can write
\begin{eqnarray*}
 |M_{n,p,q}(f;x)-f(x)| &\leq& M_{n,p,q}\bigl{(}|f(t)-f(x)|;x\biggl{)} \\
  &\leq& \sum\limits_{k=0}^{\infty}P_{n,k}(x)\biggl{|}f\biggl{(}\frac{p^n~[k]_{p,q}}{[n+k]_{p,q}}\biggl{)}-f(x)\biggl{|} \\
   &\leq& M \sum\limits_{k=0}^{\infty}P_{n,k}(x)\biggl{|}\frac{p^n~[k]_{p,q}}{[n+k]_{p,q}}-x\biggl{|}^\alpha \\
   &=&M\sum\limits_{k=0}^{\infty}\biggl{(}P_{n,k}(x)\biggl{(}\frac{p^n~[k]_{p,q}}{[n+k]_{p,q}}-x\biggl{)}^2\biggl{)}^{\alpha/2}P^{\frac{2-\alpha}{2}}_{n,k}(x)
\end{eqnarray*}
Now applying the H\"{o}lder's inequality for the sum with $p=\frac2{\alpha}$ and $q=\frac2{2-\alpha}$ and taking into consideration Lemma \ref{l1}(i) and Lemma \ref{l2} (ii), we have
\begin{eqnarray*}
 |M_{n,p,q}(f;x)-f(x)| &\leq& M\biggl{(}\sum\limits_{k=0}^{\infty}P_{n,k}(x)\biggl{(}\frac{p^n~[k]_{p,q}}{[n+k]_{p,q}}-x\biggl{)}^2\biggl{)}^{\alpha/2}
  \biggl{(}\sum\limits_{k=0}^{\infty}P_{n,k}(x)\biggl{)}^{\frac{2-\alpha}{2}} \\
  &=&M\bigl{\{}M_{n,p,q}\bigl{(}(t-x)^2;x\bigl{)}\bigl{\}}^\frac{\alpha}{2}
\end{eqnarray*}
Choosing $\delta:\delta_{n}(x)=\sqrt{M_{n,p,q}\bigl{(}(t-x)^2;x\bigl{)}}$, we obtain
\begin{equation*}
   |M_{n,p,q}(f;x)-f(x)| \leq M\delta_n(x)^{\alpha}
\end{equation*}
Hence, the desired result is obtained.\\

\section{Statistical approximation properties}

The statistical version of Korovkin theorem for sequence of positive linear operators has been given by Gadjiev and Orhan \cite{go39}.

In this section, we give a statistical approximation theorem of the operator $M_{n,p, q}$.

Let $K$ be a subset of the set $\mathbb{N}$ of natural numbers. Then, the asymptotic density $\delta(K)$ of $K$ is defined as $\delta(K)=\lim_{n}\frac{1}{n}\big|\{k\leq n~:~k \in K\}\big|$ and $|.|$ represents the cardinality of the enclosed set. A sequence $x=(x_k)$ said to be statistically convergent to the number $L$ if for each $\varepsilon >0$, the set $K(\varepsilon)=\{k\leq n:|x_k-L|>\varepsilon\}$ has asymptotic density zero (see \cite{fast}), i.e.,
\begin{eqnarray*}\label{117}
\lim_{n}\frac{1}{n}\big|\{k\leq n:|x_k-L|\geq \varepsilon\}\big|=0.
\end{eqnarray*}
In this case, we write $st-\lim x =L$.

\begin{thm} Let $0<q_n<p_n\leq 1$ such that $\lim_n p_n=1$, and $\lim_n q_n=1$. Then for the operators $M_{n,p, q}$ satisfying the condition
\begin{eqnarray}\label{194}
st-\lim_n \|M_{n,p_n, q_n}(f_i(t); x)-{f_i(x)}\|_{C[0,1]}=0,~\textrm{for all}~i=0, 1, 2,
\end{eqnarray}
we have
\begin{eqnarray}\label{224}
st-\lim_n \|M_{n,p_n, q_n}(f(t); x)-{f(x)}\|_{C[0,1]}=0,~\textrm{for all}~f \in C[0, 1]
.\end{eqnarray}
\end{thm}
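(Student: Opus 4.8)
The plan is to run the classical Bohman--Korovkin argument in its statistical (Gadjiev--Orhan) form, reducing the convergence for an arbitrary $f$ to the three test-function hypotheses in (\ref{194}). For brevity write $T_n:=M_{n,p_n,q_n}$, and recall that each $T_n$ is a positive linear operator on $C[0,1]$. Fix $f\in C[0,1]$ and $\varepsilon>0$. Since $[0,1]$ is compact, $f$ is bounded, say $|f|\le M$, and uniformly continuous, so there is a $\delta>0$ with $|f(t)-f(x)|<\varepsilon$ whenever $|t-x|<\delta$. For $|t-x|\ge\delta$ one has the crude bound $|f(t)-f(x)|\le 2M\le \frac{2M}{\delta^2}(t-x)^2$, and combining the two cases gives, for all $t,x\in[0,1]$,
\begin{equation*}
|f(t)-f(x)|\le \varepsilon+\frac{2M}{\delta^2}(t-x)^2 .
\end{equation*}

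First I would apply the positivity and linearity of $T_n$ together with the decomposition
\begin{equation*}
T_n(f;x)-f(x)=T_n\bigl(f(t)-f(x);x\bigr)+f(x)\bigl(T_n(1;x)-1\bigr),
\end{equation*}
so that, using the pointwise estimate above,
\begin{equation*}
|T_n(f;x)-f(x)|\le \varepsilon\,T_n(1;x)+\frac{2M}{\delta^2}\,T_n\bigl((t-x)^2;x\bigr)+M\,|T_n(1;x)-1| .
\end{equation*}
The central moment is then expanded as
\begin{equation*}
T_n\bigl((t-x)^2;x\bigr)=\bigl(T_n(t^2;x)-x^2\bigr)-2x\bigl(T_n(t;x)-x\bigr)+x^2\bigl(T_n(1;x)-1\bigr),
\end{equation*}
which exhibits everything in terms of the three quantities $T_n(f_i;x)-x^i$ for $i=0,1,2$. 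Taking the supremum over $x\in[0,1]$ and using $0\le x\le 1$ yields a deterministic estimate of the form
\begin{equation*}
\|T_n(f)-f\|_{C[0,1]}\le \varepsilon+K\sum_{i=0}^{2}\|T_n(f_i)-f_i\|_{C[0,1]},
\end{equation*}
where $K=K(\varepsilon,M,\delta)$ is a finite constant independent of $n$ (here one uses that $\|T_n(1)\|$ is bounded, in fact $T_n(1;x)\equiv 1$ by Lemma \ref{l1}(i), so the $i=0$ term is even identically zero).

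The remaining, and genuinely statistical, step is to convert this inequality into a density statement. Given $\varepsilon'>0$, choose $\varepsilon=\varepsilon'/2$ above and set $a_{n,i}:=\|T_n(f_i)-f_i\|_{C[0,1]}$. Then $\|T_n(f)-f\|_{C[0,1]}\ge\varepsilon'$ forces $\sum_{i}a_{n,i}\ge\varepsilon'/(2K)$, hence at least one $a_{n,i}\ge\varepsilon'/(6K)$, giving the inclusion
\begin{equation*}
\bigl\{n:\|T_n(f)-f\|_{C[0,1]}\ge\varepsilon'\bigr\}\subseteq\bigcup_{i=0}^{2}\Bigl\{n:a_{n,i}\ge\tfrac{\varepsilon'}{6K}\Bigr\}.
\end{equation*}
By hypothesis (\ref{194}) each set on the right has asymptotic density zero, and a finite union of density-zero sets again has density zero; therefore the set on the left has density zero, which is precisely (\ref{224}). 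The main obstacle here is not analytic but organizational: one must keep the constant $K$ independent of $n$ and set up the inclusion so that the finite-additivity-under-union property of the natural density can be invoked, since statistical limits do not respect infinite unions. The Korovkin moment estimates themselves are routine once Lemma \ref{l1} and Lemma \ref{l2} are in hand.
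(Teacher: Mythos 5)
Your proposal is correct and follows essentially the same route as the paper's own proof: the pointwise bound $|f(t)-f(x)|\le\varepsilon+\frac{2M}{\delta^2}(t-x)^2$, the positivity/linearity decomposition leading to $\|T_n(f)-f\|_{C[0,1]}\le\varepsilon+K\sum_{i=0}^{2}\|T_n(f_i)-f_i\|_{C[0,1]}$, and the concluding set inclusion $\mathcal{A}\subset\mathcal{A}_0\cup\mathcal{A}_1\cup\mathcal{A}_2$ with the density-zero property of finite unions. The only difference is cosmetic (your pigeonhole threshold $\varepsilon'/(6K)$ versus the paper's $(\varepsilon'-\varepsilon)/(3N)$), and you rightly omit the paper's redundant re-derivation of the hypothesis (\ref{194}) from Lemma \ref{l1}.
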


\begin{proof}  From Lemma \ref{l1}(i)-(ii), it is obvious that
\begin{eqnarray*}\label{117}
\lim_n \|M_{n,p_n, q_n}(1; x)-1\|_{C[0,1]}=0~~\text{and}~~\lim_n \|M_{n,p_n, q_n}(t; x)-x\|_{C[0,1]}=0,
\end{eqnarray*}
which yields
\begin{eqnarray*}\label{117}
st-\lim_n \|M_{n,p_n, q_n}(1; x)-1\|_{C[0,1]}=0~~\text{and}~~st-\lim_n \|M_{n,p_n, q_n}(t; x)-x\|_{C[0,1]}=0.
\end{eqnarray*}
Similarly, by Lemma \ref{l1}(iii), one can deduce that
\begin{eqnarray*}\label{117}
|M_{n, p_n, q_n}(t^2; x)-x^2|&\leq&\left|\frac{p_n^n}{[n+1]} x+p_n x^2-x^2\right|.
\end{eqnarray*}
Taking supremum over $x\in [0, 1]$ in the last inequality and by using $\lim_n p^n_n=a$ and $\lim_n q^n_n=b$ where $a,b \in (0, 1)$, we observe that
\begin{eqnarray*}\label{117}
\|M_{n, p_n, q_n}(t^2; x)-x^2\|_{C[0,1]}\leq\left|\frac{p_n^n}{[n+1]}+p_n-1 \right|
\end{eqnarray*}
which implies
\begin{eqnarray*}\label{117}
st-\lim_n \|M_{n,p_n, q_n}(t^2; x)-x^2\|_{C[0,1]}=0,~\textrm{for all}~i=0, 1, 2.
\end{eqnarray*}
It is noticed here that if we take $(p_n)$ and $(q_n)$ in the interval $(0, 1]$ such that $\lim _n p^n_n=a$, $\lim _n q^n_n=b$ where $a,b \in (0, 1)$, we guarantee that $\lim_n \frac{1}{[n]_{p_n,q_n}}=0$. For example, if we choose  $(q_n)=(1-\frac{1}{n})<(e^{1/2n}(1-\frac{1}{n}))=(p_n)$ then $\lim_n q^n_n=e^{-1}$ and $\lim_n p^n_n=1/\sqrt{e}$. In this case we guarantee that $\lim_n \frac{1}{[n]_{p_n,q_n}}=0$. Besides
if we take the sequences $q_n \in (0, 1)$ and $p_n\in (q_n, 1]$ such that $\lim _n p_n=1$ and $\lim _n q_n=1$. Thus,
$\lim_n \frac{1}{[n]_{p_n,q_n}}=0$.

Now, let us take $f \in C[0, 1]$ and $x \in [0, 1]$ be fixed. Since $f$ is bounded on the whole plane there exists a number $M$ such that $|f(x)|\leq M$ for all
$-\infty<x<\infty$.
Thus, we may write
\begin{eqnarray}\label{u3}
|f(t)-f(x)|\leq 2M,~\,\,-\infty<t, x<\infty.
\end{eqnarray}
From the continuity of $f$ at the point $x\in  [0, 1]$, for given $\varepsilon >0$, there exists a number $\delta=\delta(\varepsilon)>0$ such that
\begin{eqnarray}\label{u4}
|f(t)-f(x)|<\varepsilon~~\textrm{whenever}\,\,\forall~|t-x|<\delta.
\end{eqnarray}
Setting $\varphi(x)=(t-x)^2$ and hence we derive the consequence from (\ref{u3}) and (\ref{u4}) that
\begin{eqnarray*}
|f(t)-f(x)|< \varepsilon+\frac{2 M}{\delta^2}\varphi(x).
\end{eqnarray*}
By taking into account the linearity and positivity of $M_{n,p_n, q_n}$, we obtain that
\begin{eqnarray*}
&&|M_{n,p_n, q_n}(f(t); x)-{f(x)}|\\&\leq&M_{n,p_n, q_n}\big(|f(t)-f(x)|; x\big)+f(x)~|M_{n,p_n, q_n}(1; x)-1|\\
&\leq&\left|M_{n,p_n, q_n}\left(\varepsilon+\frac{2M}{\delta^2}\varphi(x); x\right)\right|+N~|M_{n,p_n, q_n}(1; x)-1|\\
&\leq&\varepsilon+\left(\varepsilon+M+
\frac{2M}{\delta^2}a^2\right)
|M_{n,p_n, q_n}(1; x)-1|+\frac{4M}{\delta^2}a~\bigg|M_{n,p_n, q_n}(t; x)-x\bigg|
\\
&+&\frac{2M}{\delta^2}~\bigg|M_{n,p_n, q_n}\big(t^2; x\big)-x^2\bigg|
\end{eqnarray*}
where $a=\max\{|x|\}$ and $N=\|f\|_{C[0,1]}$. Then taking supremum over $x\in [0, 1]$ in the last inequality, we have
\begin{eqnarray*}
\|M_{n,p_n, q_n}(f(t); x)-{f(x)}\|_{C[0,1]}\leq \varepsilon+\eta \sum_{i=0}^{2}\|M_{n,p_n, q_n}(f_{i}(t); x)-f_i(x)\|_{C[0,1]}
\end{eqnarray*}
where $\eta=\max\left\{\varepsilon+M+
\frac{2M}{\delta^2}a^2, \frac{4M}{\delta^2}a,\frac{2M}{\delta^2}\right\}$. Now, for a given $\varepsilon' >0$, choose a number $\varepsilon >0$ such that $\varepsilon <\varepsilon'$. Then, the following sets can be defined as
\begin{eqnarray*}
\mathcal{A}:&=&\left\{n \in \mathbb N:\big\|
M_{n,p_n, q_n}(f(t); x)-{f(x)}\|_{C[0,1]}\geq\varepsilon'\right\},\\
\mathcal{A}_{i}:&=&\left\{n \in \mathbb N:\|M_{n,p_n, q_n}(f_i(t); x)-{f_i(x)}\|_{C[0,1]}\ge \frac{\varepsilon' -\varepsilon}{3N}\right\},\,\,i=0,1,2.
\end{eqnarray*}
Then, $\mathcal{A}\subset \mathcal{A}_{0}\cup \mathcal{A}_{1}\cup
\mathcal{A}_{2}$. By using the facts that $st-\lim_n p_n=1$, $st-\lim_n q_n=1$ and (\ref{194}) we immediately conclude that \begin{eqnarray*}\label{117}
st-\lim_n \|M_{n,p_n, q_n}(f(t); x)-{f(x)}\|_{C[0,1]}=0~\textrm{for all}~f \in C[0, 1],\end{eqnarray*} which completes the proof.
\end{proof}

\subsection{Rate of Statistical Convergence}

In this part, rates of statistical convergence of the operator (\ref{87}) by means of modulus of continuity \cite{lb313} are introduced.\\
The modulus of continuity for the space $f \in C[0,1]$ is defined by
$$\tilde{W}(f,\delta)= \sup \limits_{x,t\in [0,1], |t-x|\leq\delta}|f(t)-f(x)|$$
where $\tilde{W}(f,\delta)$ satisfies the following conditions:\\
$\forall f\in C[0,1]$

\begin{enumerate}
\item  [(i)]$\lim_{\delta \rightarrow 0}\tilde{W}(f,\delta)=0$\\
\item [(ii)]$|f(t)-f(x)|\leq \tilde{W}(f,\delta)\big(\frac{|t-x|}{\delta}+1\big)$
\end{enumerate}

\begin{thm}
Let the sequences $p=(p_n)$, $q=(q_n)$, $0<q_n<p_n\leq 1$ satisfies the condition $p_n \rightarrow 1$, $q_n \rightarrow 1$, so we have
$$|M_{n, p_n, q_n}(f;x)-f(x)|\leq 2 \tilde{W}\big(f;\sqrt{\delta_{n}(x)}\big)$$ where $\delta_n(x)=\frac{{p_n}^n}{[n+1]_{p_n,q_n}}x+(p_n-1)x^2$
\end{thm}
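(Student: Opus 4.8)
The plan is to follow the same template as the earlier estimate that produced the bound $2w_f(\delta_n)$, but now carrying the full second moment $M_{n,p,q}((t-x)^2;x)$ through the calculation instead of only its dominant term. Write $t_k=\frac{p_n^{\,n}[k]_{p_n,q_n}}{[n+k]_{p_n,q_n}}$ for the nodes of the operator and let $P_{n,k}(x)\ge 0$ denote the associated coefficients, so that $M_{n,p_n,q_n}(f;x)=\sum_{k\ge 0}P_{n,k}(x)f(t_k)$. Since Lemma~\ref{l1}(i) gives $M_{n,p_n,q_n}(1;x)=1$, i.e. $\sum_{k\ge 0}P_{n,k}(x)=1$, I would first subtract $f(x)=f(x)\sum_k P_{n,k}(x)$ inside the sum and use positivity of the $P_{n,k}$ to obtain
$$|M_{n,p_n,q_n}(f;x)-f(x)|\le \sum_{k\ge 0}P_{n,k}(x)\,|f(t_k)-f(x)|.$$

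Next I would invoke property (ii) of the modulus $\tilde W$, namely $|f(t_k)-f(x)|\le \tilde W(f,\delta)\big(\frac{|t_k-x|}{\delta}+1\big)$ for an arbitrary $\delta>0$ to be fixed at the end. Substituting and splitting the sum gives
$$|M_{n,p_n,q_n}(f;x)-f(x)|\le \tilde W(f,\delta)\Big(1+\frac1\delta\sum_{k\ge 0}P_{n,k}(x)\,|t_k-x|\Big),$$
where the constant term has again collapsed to $1$ via $\sum_k P_{n,k}(x)=1$.

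The key step, and the one I expect to carry the real content, is the passage from the first absolute moment $\sum_k P_{n,k}(x)|t_k-x|$ to the square root of the second moment. Here I would apply the Cauchy--Schwarz inequality to the nonnegative weights, writing $P_{n,k}(x)|t_k-x|=\sqrt{P_{n,k}(x)}\cdot\sqrt{P_{n,k}(x)}\,|t_k-x|$, to get
$$\sum_{k\ge 0}P_{n,k}(x)\,|t_k-x|\le \Big(\sum_{k\ge 0}P_{n,k}(x)\Big)^{1/2}\Big(\sum_{k\ge 0}P_{n,k}(x)(t_k-x)^2\Big)^{1/2}=\big(M_{n,p_n,q_n}((t-x)^2;x)\big)^{1/2}.$$
By Lemma~\ref{l2}(ii) the second moment is bounded by $\delta_n(x)=\frac{p_n^{\,n}}{[n+1]_{p_n,q_n}}x+(p_n-1)x^2$, so the displayed quantity is at most $\sqrt{\delta_n(x)}$. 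The positivity of the weights is exactly what makes the Cauchy--Schwarz application legitimate, and convergence of the infinite sums is guaranteed since all the moments involved are finite by Lemma~\ref{l1}.

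Finally I would make the optimal choice $\delta=\sqrt{\delta_n(x)}$, so that $\frac1\delta\sqrt{\delta_n(x)}=1$ and the bracket reduces to $1+1=2$, yielding
$$|M_{n,p_n,q_n}(f;x)-f(x)|\le 2\,\tilde W\big(f,\sqrt{\delta_n(x)}\big),$$
which is the assertion. I would remark that the hypotheses $p_n\to 1$, $q_n\to 1$ play no role in the inequality itself; they enter only to force $\delta_n(x)\to 0$ uniformly on $[0,1]$, whence property (i) of $\tilde W$ makes the right-hand side vanish and the estimate upgrades to uniform convergence.
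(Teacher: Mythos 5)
Your proof is correct, and it actually supplies what the paper omits: the paper's entire ``proof'' of this theorem is the single sentence \emph{``Proof is similar to the technique used in previous theorems,''} pointing back to the earlier result that gives the bound $2w_f\big(\sqrt{p^{n}/[n+1]_{p,q}}\big)$. Your route differs from that template in one step. The paper's earlier argument inserts the second-order form of the modulus inequality, $|f(t)-f(x)|\leq w_f(\delta)\big(\tfrac{(t-x)^2}{\delta^2}+1\big)$, so the second moment $M_{n,p,q}\big((t-x)^2;x\big)$ appears directly and no Cauchy--Schwarz is needed; you instead use the first-order property (ii) listed for $\tilde W$, $|f(t)-f(x)|\leq \tilde W(f,\delta)\big(\tfrac{|t-x|}{\delta}+1\big)$, and then pass from the first absolute moment to the square root of the second moment by Cauchy--Schwarz with the weights $P_{n,k}(x)$. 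Both arguments rest on the same ingredients --- Lemma \ref{l1}(i) for normalization, Lemma \ref{l2}(ii) for the bound $\delta_n(x)$ on the second moment, and the choice $\delta=\sqrt{\delta_n(x)}$ --- and both produce the constant $2$, so the difference is one of technique rather than substance. If anything, your version is the more faithful to the paper's own setup, since it uses exactly the property (ii) that the paper states for $\tilde W$, whereas the earlier template relies on the quadratic variant, which is true but never recorded for $\tilde W$. Your closing remark that the hypotheses $p_n\to 1$, $q_n\to 1$ play no role in the pointwise inequality itself is also correct.

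Two small caveats, neither specific to your argument. First, at $x=0$ the choice $\delta=\sqrt{\delta_n(0)}=0$ is degenerate (and at $x=1$ the operator is defined by $M_{n,p,q}(f;1):=f(1)$), so these endpoints should be dispatched separately by noting both sides vanish there. Second, the statement tacitly presumes $\delta_n(x)=\tfrac{p_n^{\,n}}{[n+1]_{p_n,q_n}}x+(p_n-1)x^2\geq 0$, which is not obvious from the formula since $p_n\leq 1$ makes the second term nonpositive; it is inherited from Lemma \ref{l2}(ii), whose left-hand side is nonnegative, and is a defect of the paper's statement rather than of your proof.
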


Proof is similar to the technique used in previous theorems.

\newpage

\section{Example}

With the help of Matlab, we show comparisons and some illustrative
graphics \cite{ma1}  for the convergence of operators (\ref{87}) to the function $$f(x)=(x-\frac{1}{3})(x-\frac{1}{2})(x-\frac{3}{4})$$ under different parameters. Because of our machines have not enough speed and power to compute the complicated infinite series, we have to investigate our series only for finite sum.

From figure $(\ref{f1})$, we can observe that as the value of $k$ increases, $(p,q)$-Meyer-Konig and Zeller operators $M_{n,p, q}(1,x)$ given by (\ref{ee1}) converges towards $1$. In figure $(\ref{f1})$ (a), value of $k=0 ~\text{to} ~100$ whereas in figure  $(\ref{f1})$ (b), value of $k=0 ~\text{to} ~500$ for $n=3.$

Also, from figure $(\ref{f2})$, it can be observed
that as the value of $p, q$ approaches towards $1$ provided $ 0 < q < p \leq 1,$ operators converge towards the function.


\begin{figure}[ht]\label{f1}
    \centering
    \subfigure[]
    {
        \includegraphics[height=4cm, width=6cm]{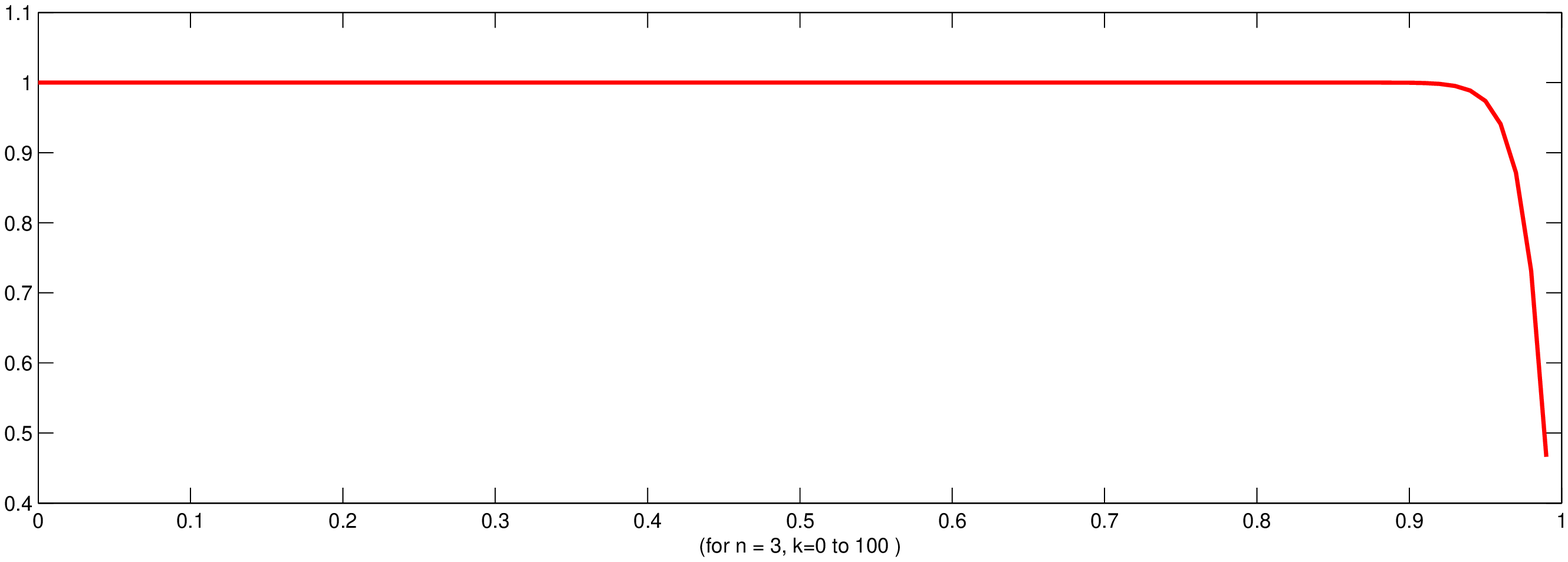}
        \label{fig:first_sub}
    }
    \subfigure[]
    {
        \includegraphics[height=4cm, width=6cm]{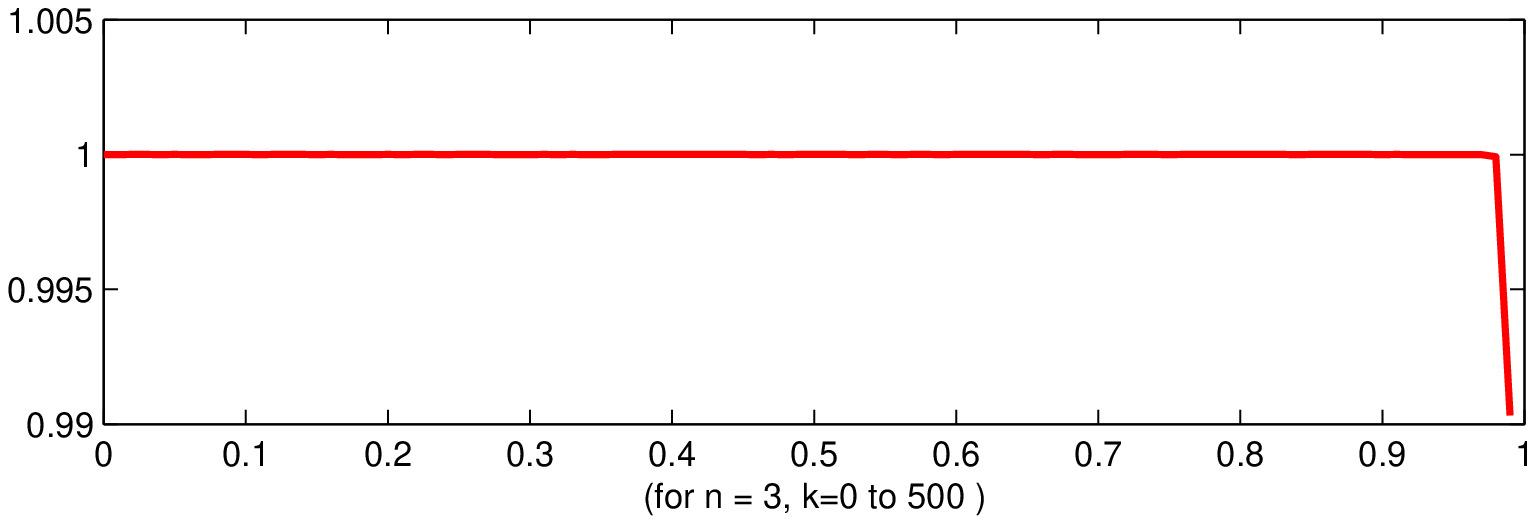}
        \label{fig:second_sub}
    }
    \caption{$(p,q)$-Meyer-Konig and Zeller operator}\label{f1}
\end{figure}

\begin{figure}[ht]\label{f2}
    \centering
    \subfigure[]
    {
        \includegraphics[height=4cm, width=6cm]{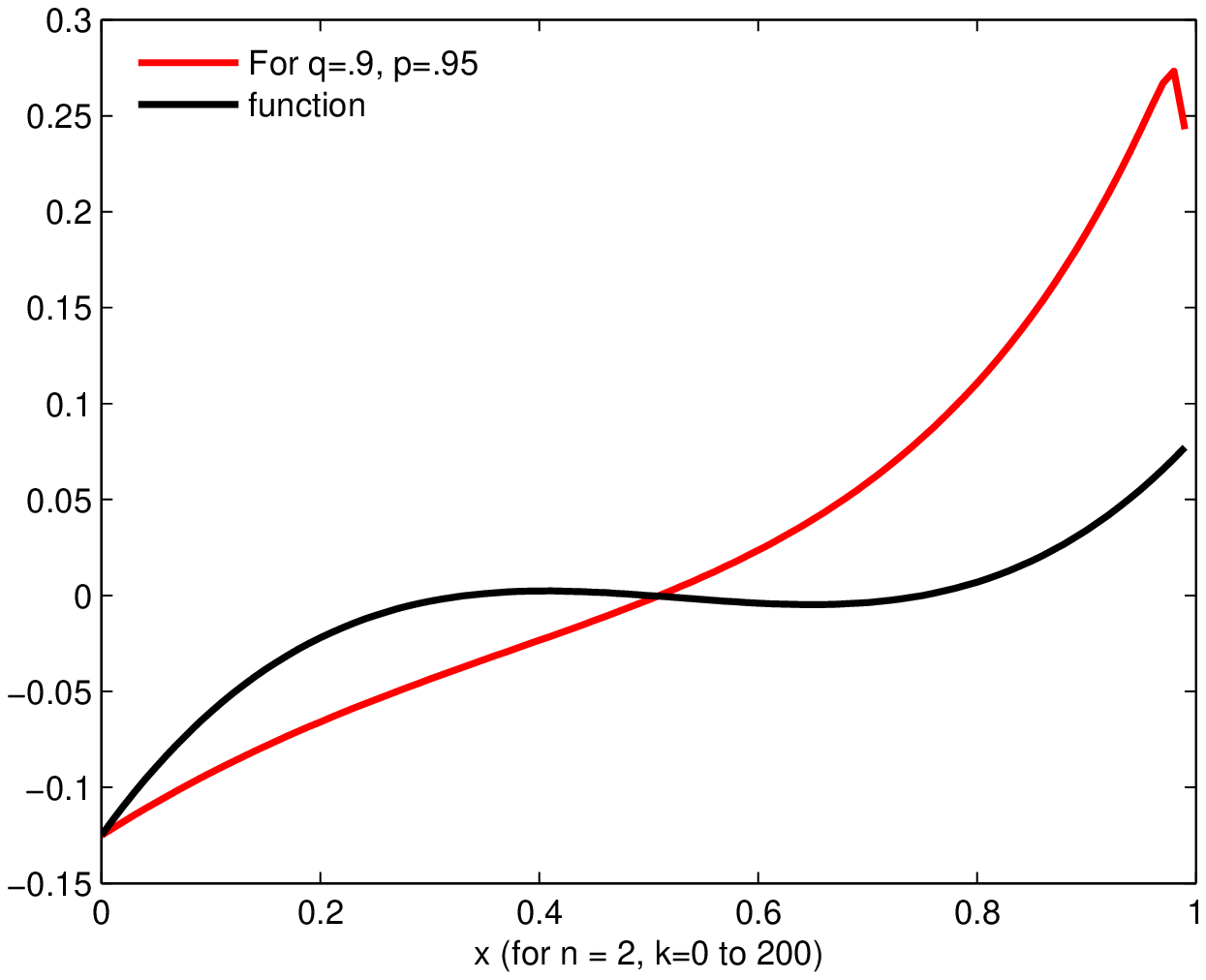}
        \label{fig:first_sub}
    }
    \subfigure[]
    {
        \includegraphics[height=4cm, width=6cm]{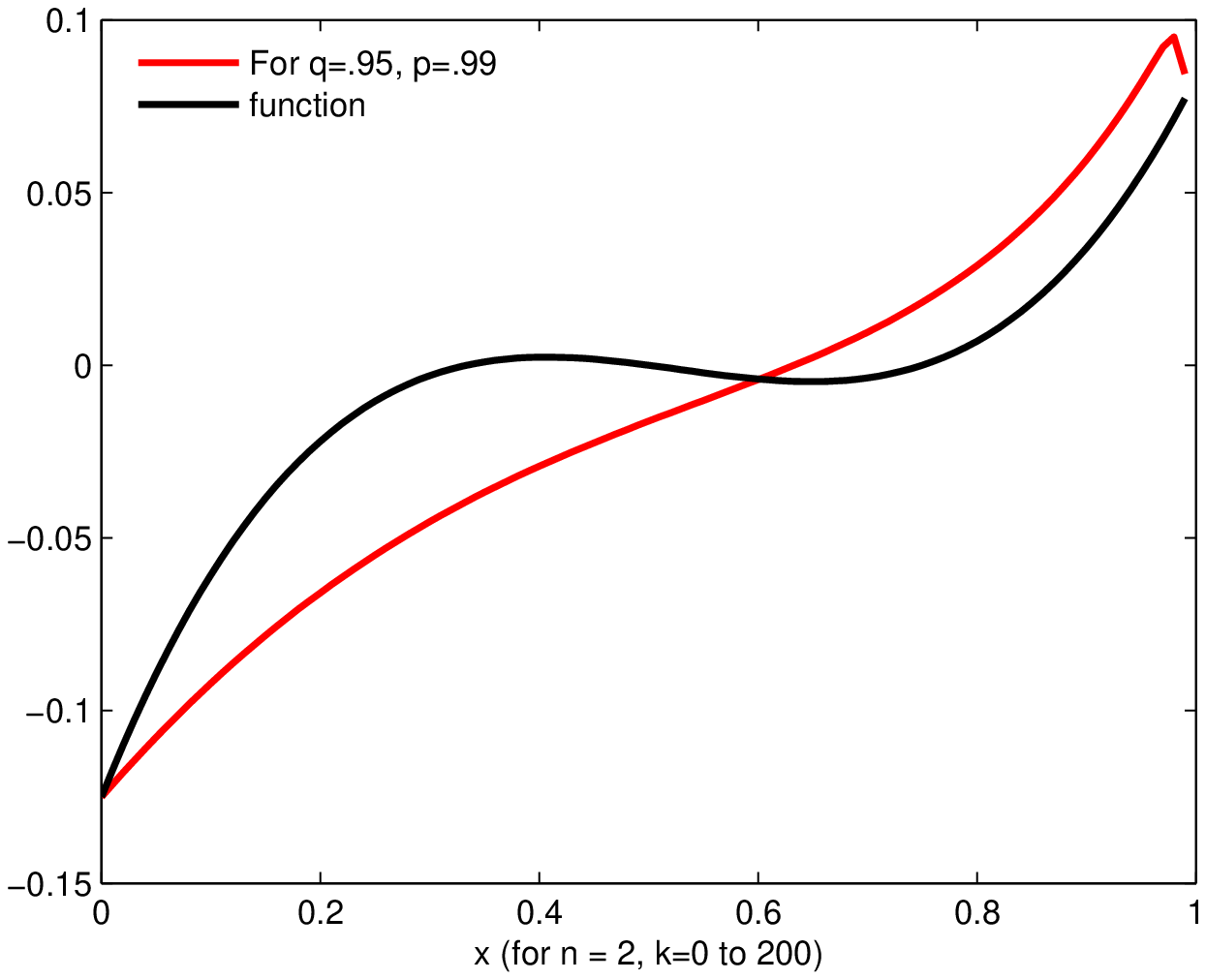}
        \label{fig:second_sub}
    }
    \subfigure[]
    {
        \includegraphics[height=4cm, width=6cm]{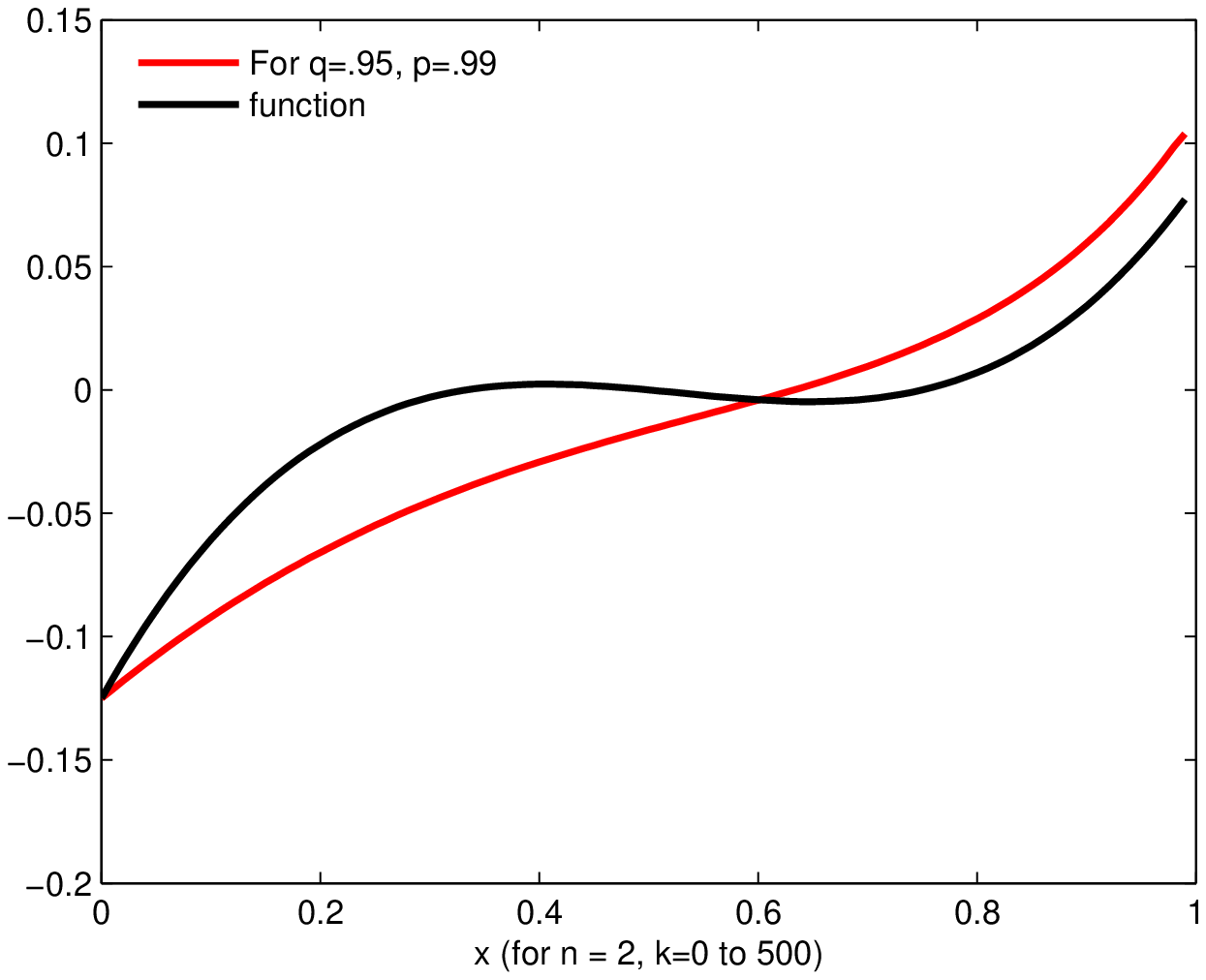}
        \label{fig:third_sub}
    }
    \caption{$(p,q)$-Meyer-Konig and Zeller operators}\label{f2}
    \label{fig:sample_subfigures}
\end{figure}

%
%

\newpage

\section{ Future work}

Currently, we are working on Kantorovich form of $(p,q)$-Meyer-Konig and Zeller operators.



\begin{thebibliography}{99}

\bibitem{abel} U. Abel, The moments for the Meyer-Konig and Zeller operators, \emph{J. Approx. Theory} 82 (1995), 352-365.

\bibitem{acar1} T. Acar, A. Aral, and S. A. Mohiuddine, Approximation by bivariate $(p, q)$−Bernstein-Kantorovich operators
\emph{arXiv}:1454072 [math.CA] 13 Jan 2016.
%

\bibitem{acar3} T. Acar, $(p,q)$-Generalization of Szász-Mirakyan operators, \emph{Mathematical Methods in the Applied Sciences}, DOI: 10.1002/mma.3721.

\bibitem{aral} A. Aral, V.Gupta and Ravi P. Agarwal, Applications of $q$−calculus in operator theory. Berlin: \emph{Springer},
2013.

\bibitem{brn} S. N. Bernstein, constructive proof of Weierstrass approximation theorem, \emph{Comm. Kharkov Math. Soc}. (1912)

\bibitem{bezier} P.E. B$\acute{e}$zier, Numerical Control-Mathematics and applications, \emph{John Wiley and Sons}, London, 1972.

\bibitem{cai} Qing-Bo Cai, Guorong Zhoub,   On $(p, q)$-analogue of Kantorovich type Bernstein-Stancu-Schurer operators, \emph{Applied Mathematics and Computation}, Volume 276, 5 March 2016, Pages 12–20.

\bibitem{rahman} G. Gasper, M. Rahman, Basic hypergometric series, \emph{Cambridge University Press},
Cambridge, 1990.

\bibitem{mah} Mahouton Norbert Hounkonnou, Joseph D\'{e}sir\'{e} Bukweli Kyemba, $\mathcal{R}(p,q)$-calculus:
differentiation and integration, \emph{SUT Journal of Mathematics,} Vol. 49, No. 2 (2013), 145-167.

\bibitem{jag} R. Jagannathan, K. Srinivasa Rao, Two-parameter quantum algebras, twin-basic numbers, and associated generalized hypergeometric series, \emph{Proceedings of the International Conference on Number Theory and Mathematical Physics}, 20-21 December 2005.

\bibitem{ali} A. Karaisa, On the approximation properties of bivariate $(p,q)$−Bernstein operators arXiv:1601.05250

\bibitem{ali1} Khursheed J. Ansari, A. Karaisa, On the approximation by Chlodowsky type generalization of $(p,q)$-Bernstein operators, \emph{arXiv}:submit/1455850 [math.CA] 15 Jan 2016.

\bibitem{korovkin} P. P. Korovkin, Linear operators and approximation theory,
\emph{Hindustan Publishing Corporation}, Delhi, 1960.

\bibitem{lupas} A. Lupa\c{s}, A $q$-analogue of the Bernstein operator, Seminar on Numerical and Statistical Calculus, \emph{University of Cluj-Napoca}, 9 (1987) 85-92.

\bibitem{cheney}  E. V. Cheney and A. Sharma,  Bernstein power series, Canad. J. Math., 16, pp.241–253, 1964.

\bibitem{erd37} S. Ersan and O. Do\u{g}ru, Statistical approximation properties of $q$-Bleimann, Butzer and Hahn operators, Mathematical and Computer Modelling, 49 (2009) 1595--1606.

\bibitem{dd} O. Do\u{g}ru, O. Duman, Statistical approximation of Meyer-K\={o}nig and Zeller operators based on the $q$-integers, Publ. Math. Debrecen, 68 (2006) 199--214.

\bibitem{fast} H. Fast, Sur la convergence statistique, Colloq. Math. 2(1951) 241-244.

\bibitem{go39} A. D. Gadjiv and C. Orhan, Sme approximzation theorems via
statistical convergence, Rocky Mount. J. Math., 32 (2002) 129--138.

\bibitem{ma1} M. Mursaleen, Asif Khan, Generalized $q$-Bernstein-Schurer Operators and Some
Approximation Theorems, \textit{Journal of Function Spaces and Applications} Volume 2013, Article ID 719834, 7 pages, http://dx.doi.org/10.1155/2013/719834.

\bibitem{mka1} M. Mursaleen, K. J. Ansari, A. Khan, On $(p,q)$-analogue of Bernstein
Operators, \emph{Applied Mathematics and Computation,} 266 (2015) 874-882, (Erratum to `On $(p,q)$-analogue of Bernstein Operators' Appl. Math. Comput. 266 (2015) 874-882.)

\bibitem{mur8} M. Mursaleen, K. J. Ansari and Asif Khan, Some Approximation Results by $(p,q)$-analogue of Bernstein-Stancu Operators, \emph{Applied Mathematics and Computation} 264,(2015), 392-402. (Some Approximation Results by $(p,q)$-analogue of Bernstein-Stancu Operators(Revised)arXiv:1602.06288v1 [math.CA].)

\bibitem{mka3} M. Mursaleen, K. J. Ansari and Asif Khan, Some approximation results for
Bernstein-Kantorovich operators based on $(p, q)$-calculus, accepted for publication in \emph{Scientific bulletin}.

\bibitem{mka5}M. Mursaleen, Faisal Khan and Asif Khan, Approximation by $(p,q)$-Lorentz polynomials on a compact disk, \emph{arXiv}:1504.05093v1 [math.CA].

\bibitem{mnak1} M. Mursaleen, Md. Nasiruzzaman, Asif Khan and Khursheed J. Ansari, Some approximation results on Bleimann-Butzer-Hahn operators defined by $(p,q)$-integers (accepted in Filomat), \emph{arXiv:} 1505.00392v1 [math.CA].

\bibitem{zmn}  M. Mursaleen and Md. Nasiruzzaman and Ashirbayev Nurgali, Some approximation results on
Bernstein-Schurer operators defined by $(p,q)$-integers, \emph{Journal of Inequalities and Applications }(2015), 249, DOI 10.1186/s13660-015-0767-4.

\bibitem{ms314} N. Mahmudov and P. Sabancigil, A $q$-analogue of the Meyer-K\={o}nig and Zeller Operators. Bull. Malays. Math. Sci. Soc. (2), 35(1)
(2012) 39--51

\bibitem{hp} Halil Oru\c{c}, George M. Phillips,  $q$-Bernstein polynomials and B$\acute{e}$zier curves, \emph{Journal of Computational and Applied Mathematics} 151 (2003) 1-12.

 \bibitem{sofia} Sofiya Ostrovska,  On the Lupa\c{s}¸ $q$-analogue of the bernstein operator, \emph{Rocky mountain journal of mathematics} Volume 36, Number 5, 2006.

\bibitem{pl} G.M. Phillips, Bernstein polynomials based on the $q$-integers,
The heritage of P.L.Chebyshev, \emph{Ann. Numer. Math.}, 4 (1997) 511--518.

\bibitem{phillips} G.M. Phillips, A De Casteljau Algorithm For
Generalized Bernstein Polynomials, \emph{BIT} 36 (1) (1996), 232-236.

\bibitem{meyer} W. Meyer-Konig and K. Zeller,  Bernsteinsche Potenzreihen, Studia Math., 19,pp. 89-94, 1960.

\bibitem{khalid1} Khalid Khan, D.K. Lobiyal, Adem Kilicman,  A de Casteljau Algorithm for Bernstein type Polynomials based on $(p,q)$-integers, \emph{arXiv} 1507.04110.

\bibitem{khalid2} Khalid Khan, D.K. Lobiyal, B$\acute{e}$zier curves based on Lupas $(p,q)$-analogue of Bernstein polynomials in CAGD, \emph{arXiv}:1505.01810.

\bibitem{khalid3} Khalid Khan, D. K. Lobiyal, Adem Kilicman, B$\acute{e}$zier curves and surfaces based on  modified Bernstein polynomials, \emph{arXiv}:1511.06594v1.

\bibitem{lb313} B. Lenze, Bernstein Baskakov Kantorovich operators and
Lipschitz-type maximal functions, in: Colloq. Math. Soc. Janos Bolyai, 58,
Approx. Th. (1990) 469--496.

\bibitem{inter} George M. Phillips,Interpolation and Approximation by Polynomials, (2003) \emph{Springer}

 \bibitem{ph1} G. M. Phillips, A generalization of the Bernstein polynomials
based on the $q$-integers \emph{ANZIAMJ} 42(2000), 79-86.

\bibitem{trif} T. Trif, Meyer-Konig AND Zeller Operators based  on the  $q$-integers, Rev. Anal. Num´er. Th´eor. Approx., vol. 29 (2000) no. 2, pp. 221–229.

\bibitem{vp} K. Victor, C. Pokman, Quantum Calculus, \textit{Springer-Verlag} (2002), New York Berlin Heidelberg.

\bibitem{wafi} A. Wafi, Nadeem Rao, Approximation properties of $(p,q)$-variant of Stancu-Schurer and Kantorovich-Stancu-Schurer operators,  arXiv:1508.01852v2.
\end{thebibliography}
\end{document}